\newtheorem{thm}{Theorem}[section]
\newtheorem{cor}[thm]{Corollary}
\newtheorem{prop}[thm]{Proposition}
\newtheorem{defn}[thm]{Definition}
\theoremstyle{definition}
\newcommand{\bbe}{\mathbf{e}}
\newcommand{\bbr}{\mathbf{R}}
\newcommand{\bbp}{\mathbf{P}}
\newcommand{\bbz}{\mathbf{Z}}
\newcommand{\ee}{\mathbb{E}}
\newcommand{\mf}{\mathcal{F}}
\newcommand{\pp}{\mathbb{P}}
\numberwithin{equation}{section}
\DeclareMathOperator{\Poiss}{Poiss}
\newtheorem{lemma}[thm]{Lemma}
\newcommand{\1}{\mathbbm{1}}
\renewcommand{\bar}{\overline}
\renewcommand{\hat}{\widehat}
\newcommand{\cvp}{\xrightarrow{\mathbb{P}}}
\newcommand{\op}{\hat{p}}
\newcommand{\oT}{\overline{T}}
\begin{document}

\title{A phase transition for repeated averages}
\author{Sourav Chatterjee}
\author{Persi Diaconis}
\address{Departments of Mathematics and Statistics, Stanford University, USA}
\email{souravc@stanford.edu}
\email{diaconis@math.stanford.edu}
\author{Allan Sly}
\author{Lingfu Zhang}
\address{Departments of Mathematics, Princeton University, USA}
\email{lingfuz@math.princeton.edu}
\email{allansly@princeton.edu}

\thanks{Sourav Chatterjee's research was partially supported by NSF grant DMS-1855484}
\thanks{Persi Diaconis's research was partially supported by NSF grant DMS-0804324}
\thanks{Allan Sly's research was partially supported by DMS-1855527, Simons Investigator Grant and MacArthur Fellowship}
\keywords{Markov chain, convergence rate, cutoff phenomenon}
\subjclass[2010]{60J05, 60J20}

\begin{abstract}
Let $x_1,\ldots,x_n$ be a fixed sequence of real numbers. At each stage, pick two indices $I$ and $J$ uniformly at random and replace $x_I$, $x_J$ by $(x_I+x_J)/2$, $(x_I+x_J)/2$. Clearly all the coordinates converge to $(x_1+\cdots+x_n)/n$. We determine the rate of convergence, establishing a sharp `cutoff'  transition answering a question of Jean Bourgain. 
\end{abstract}

\maketitle


\section{Introduction}\label{introsec}
Around 1980, Jean Bourgain asked one of us (personal communication to P.~D.) the question in the abstract. It recently resurfaced via a question in quantum computing (thanks to Ramis Movassagh). We record some convergence theorems. 

Fix $x_0 = (x_{0,1}, \ldots,x_{0,n})\in \bbr^n$ and define a Markov chain as follows: given $x_k$, pick two distinct coordinates $I$ and $J$ uniformly at random, and replace both $x_{k,I}$ and $x_{k,J}$ by $(x_{k,I}+x_{k,J})/2$, keeping all other coordinates the same, to obtain $x_{k+1}$. 

Let $\bar{x}_0 = \frac{1}{n}\sum_{i=1}^n x_{0,i}$. We study the rate of convergence of $x_k$ to the vector $(\bar{x}_0,\ldots,\bar{x}_0)$.  The expected $L^2$ norm can be computed exactly.  In Section \ref{proofsec} we show that
\begin{equation}\label{eq:L2bound}
\ee(\sum_{i=1}^n (x_{k,i}-\bar{x}_0)^2) = (1-\frac{1}{n-1})^k \sum_{i=1}^n (x_{0,i}-\bar{x}_0)^2
\end{equation}
giving convergence in $L^2$ for $k\gg n$. Convergence in $L^1$, however, happens in time of order $n\log n$.  The majority of the paper is devoted to the question of establishing that this transition occurs with cutoff and determining its location and window.

We will mostly focus on the initial condition $x_0=(1,0,\ldots,0)$ where all of the mass is concentrated on a single entry. This is a worst case as will be seen below.
We denote the $L^1$ distance from convergence by $T(k) = \sum_{i=1}^n |x_{k,i}-\bar{x}_0|$.
An immediate consequence of equation~\eqref{eq:L2bound} is that for $k=n \log n + cn$ with $c>0$ we have that
\[
\ee(T(k))\le e^{-c/2},
\]
giving $L^1$ convergence shortly after $n\log n$.
On the other hand, a simple counting argument shows that for $k=(\frac12-\epsilon) n\log n$ there are only $o(n)$ number of non-zero entries and hence $T(k)=2-o(1)$.

It is natural to ask whether either $\frac12 n\log n$ or $n\log n$ gives the cutoff location. In fact we establish that cutoff occurs strictly in between and that the leading order constant is $\frac1{2\log 2}$. Below and for the rest of this paper we use $\cvp$ to denote convergence in probability.
\begin{thm}\label{thm:main}
For $x_0=(1,0,\ldots,0)$, as $n\to\infty$ we have
\[
T\left(\theta n\log n\right) \cvp 2
\]
for any $\theta<\frac1{2\log 2}$, and
\[
T\left(\theta n\log n\right) \cvp 0,
\]
for any $\theta > \frac1{2\log 2}$.
\end{thm}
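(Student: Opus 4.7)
The plan is to use a ball representation tailored to the initial condition $x_0=(1,0,\ldots,0)$. Start with a single ball of mass $1$ at site $1$. Whenever sites $I,J$ are averaged, each ball sitting at $I$ or $J$ is replaced by two half-mass children, one placed at each of $I$ and $J$. Then $x_{k,i}$ is the total ball-mass at site $i$, and every ball carries mass $2^{-d}$ where $d$ is its depth, i.e. the number of ancestral splits. Total mass is conserved, and a single ball's depth over $k$ steps behaves like $\mathrm{Bin}(k, 2/n)$, concentrated around $2k/n$. Setting $2k/n = \log_2 n$ gives $k = (n\log n)/(2\log 2)$, which is exactly the cutoff location: the time at which a typical ball's mass crosses $1/n$.

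For the lower bound ($\theta < 1/(2\log 2)$), I would show that with high probability the mass carried by balls of depth $\ge \log_2 n - C$ is $o(1)$, for a slowly growing $C = C(n)\to\infty$. A first-moment calculation, comparing the real dynamics to an idealized independent branching (which overcounts depths), bounds the expected such mass by $\pp(\mathrm{Bin}(k, 2/n) \ge \log_2 n - C)$; a Chernoff estimate makes this $o(1)$ whenever $\theta < 1/(2\log 2)$. Conditional on this event, every remaining ball has mass $\ge 2^C/n \gg 1/n$, and mass conservation limits the number of occupied sites to $n\cdot 2^{-C} = o(n)$. The $n - o(n)$ unoccupied sites each contribute $1/n$ to $T(k)$, yielding $T(k) \cvp 2$.

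For the upper bound ($\theta > 1/(2\log 2)$), the goal is that the $x_{k,i}$ aggregate almost uniformly to $1/n$. Heuristically, by time $k = \theta n \log n$ the typical ball has mass $\approx n^{-2\theta\log 2} \ll 1/n$ and there are $\gg n$ balls; if these were distributed uniformly at random over the $n$ sites, Poisson-type concentration would give $x_{k,i} = (1\pm o(1))/n$ per site. To make this rigorous I would split the analysis into two phases. In the first phase, of length slightly exceeding $(n\log n)/(2\log 2)$, I would track the growth of the ball count and the depth profile, certifying that the mass has fragmented into sufficiently many small balls. In the second phase I would use the dual description: the dual of the averaging process is a random walk that, whenever its current site is involved in an averaging, flips a coin to either stay or jump to the partner site. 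Each such walk mixes after $O(1)$ moves, and there are $\Theta(\log n)$ moves in total by time $\theta n\log n$, so each walk individually equidistributes.

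The main obstacle is the upper bound, specifically controlling the joint distribution of the $n$ dual walks (equivalently, the ball positions), which are coupled through the shared averaging history. The $L^2$ bound \eqref{eq:L2bound} alone gives $\ee T(k) \le n^{(1-\theta)/2}$, which is small only for $\theta > 1$, missing the sharp threshold by a large margin. To reach $\theta = 1/(2\log 2)$ I expect a careful second-moment analysis to be needed: bound $\ee[x_{k,i}^2]$ by the probability that two independent copies of the dual walk from $i$ both land at site $1$, and show that this probability is $(1+o(1))/n^2$ once the two walks decouple, which happens shortly after their first common opportunity. Combined with the branching-structure control from phase one, this should yield concentration of $x_k$ around the uniform vector and hence $T(k)\cvp 0$.
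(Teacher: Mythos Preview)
Your lower bound is essentially the paper's argument: the ball/particle representation, the observation that a single ball's depth after $k$ steps is approximately $\mathrm{Bin}(k,2/n)$, and the conclusion that for $\theta<1/(2\log 2)$ almost all mass sits in $o(n)$ sites. The paper formalizes this via a coupled process $w_t$ that sends balls to a graveyard when two nonempty sites collide or when depth exceeds $H_n\approx\log_2 n$, and then proves a size-biased depth estimate (Proposition~\ref{weightedestimate}); your first-moment/Chernoff sketch would recover the cruder Theorem~\ref{thm:main} version of this.

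The upper bound has a real gap. Your proposed dual-walk second-moment computation, namely showing $\ee[x_{k,i}^2]=(1+o(1))/n^2$, is exactly the statement $\ee[S(k)]=o(1/n)$, and you already know from \eqref{eq:L2bound} that $\ee[S(k)]\asymp n^{-\theta}$; this is $o(1/n)$ only for $\theta>1$, not for $\theta>1/(2\log 2)$. Saying ``combined with the branching-structure control from phase one'' does not explain how the phase-one information enters the second-moment calculation; as written, your two phases are disconnected.

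The missing idea is much simpler than a refined dual analysis. After phase one you know that all but an $o(1)$ fraction of the mass lies in pieces of size at most $M=n^{-1+o(1)}$. Call this good mass $y$, viewed as a vector in $[0,M]^n$ with $\sum_i y_i=1-o(1)$, and let it continue to evolve under the \emph{same} averaging moves (this is linear, so $y$ is itself an averaging process). Its $L^2$ norm satisfies $\sum_i (y_i-\bar y)^2\le M\sum_i y_i\le M$, and by \eqref{eq:L2bound} (or its continuous analogue \eqref{eq:contexplmm}) after further time $s$ this drops to $e^{-s/(n-1)}M$. Hence the $L^1$ deviation of $y$ is at most $\sqrt{n\,e^{-s/(n-1)}M}$, which is $o(1)$ as soon as $s\gg n\log(nM)=o(n\log n)$. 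The remaining ``bad'' mass (collisions and still-large pieces) already has total $o(1)$, so its $L^1$ contribution is $o(1)$ trivially. This is exactly the paper's route: it makes the splitting explicit via sub-processes $x^{(k)}$ with $\|x^{(k)}_{t[k]}\|_\infty<2^{\delta\sqrt{\log_2 n}}/n$ (Lemma~\ref{boundpart}), applies \eqref{eq:contexplmm} to each, and bounds the leftover mass by Lemma~\ref{lowerboundpart}. The point you should take away is that the $L^2$ bound is not too weak---it is only too weak when applied to the raw process with $\|x_0\|_\infty=1$; applied after phase one to a process with small $L^\infty$, it is sharp.
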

As we will explain in Section~\ref{s:sketch} the time $\frac1{2}n\log_2 n$ corresponds to the time at which a size biased co-ordinate of $x_k$ is $O(1/n)$.  Given that there is a sharp transition, it is natural to ask, how sharp?  In the following theorem we show that the window is order $n\sqrt{\log n}$ and characterise the descent of $T(k)$ in terms of the cumulative distribution function of the standard normal.

\begin{thm}\label{cutoffthm}
Let $\Phi:\bbr \to [0, 1]$ be the cumulative distribution function of the standard normal distribution $\mathcal{N}(0,1)$.
For $x_0=(1,0,\ldots,0)$, and any $a \in \bbr$, as $n\to \infty$ we have
$T(\lfloor n(\log_2(n) + a\sqrt{\log_2(n)})/2 \rfloor) \cvp 2\Phi(-a)$.
\end{thm}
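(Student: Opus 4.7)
The plan is to reduce $T(k)$ to a one-dimensional question about a \emph{size-biased tracker} (ghost). Define an auxiliary Markov chain $I^*_0, I^*_1, \ldots$ on $\{1,\ldots,n\}$ by $I^*_0 = 1$, where at each step, if the averaging pair $\{I_{t+1},J_{t+1}\}$ contains $I^*_t$, then $I^*_{t+1}$ is uniform on this pair, and otherwise $I^*_{t+1} = I^*_t$. A one-line induction shows $\pp(I^*_T = i \mid x_0,\ldots,x_T) = x_{T,i}$, so $Y_T := x_{T,I^*_T}$ is a (conditional) size-biased sample from $x_T$. Using $\sum_i x_{T,i}=1$ and $x_{T,i}\ge 0$ yields the key identity
\[
T(k) = 2\sum_i (x_{T,i}-1/n)^+ = 2\,\pp(Y_T > 1/n \mid x_0,\ldots,x_T) - \frac{2|A|}{n}, \qquad A := \{i : x_{T,i} > 1/n\},
\]
so the theorem reduces to proving $\pp(Y_T > 1/n \mid x_0,\ldots,x_T) \cvp \Phi(-a)$ together with $|A|/n \cvp 0$.

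\emph{Step 1 (ghost dynamics and CLT).} When the ghost is involved in an averaging (probability $2/n$ per step), $Y_{t+1} = (Y_t + Z_t)/2$ where $Z_t$ is the partner's value. While $Y_t \gg 1/n$ the partner typically has $Z_t = O(1/n)$, so $Y_{t+1} \approx Y_t/2$. Let $K_T$ count the involvements, so $K_T \sim \mathrm{Binomial}(T, 2/n)$. For $T = \lfloor n(\log_2 n + a\sqrt{\log_2 n})/2 \rfloor$, the CLT gives $(K_T - \log_2 n)/\sqrt{\log_2 n}$ converging in distribution to $a + \mathcal{N}(0,1)$, so under ideal halving $\pp(Y_T > 1/n) = \pp(K_T < \log_2 n) \to \Phi(-a)$. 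To justify the idealization, I would show that the cumulative logarithmic error from non-zero partners is $o_p(\sqrt{\log n})$: in the bulk regime $Y_t \gg 1/n$ this is essentially free (since $Z_t/Y_t$ is small with high probability), and in the short terminal regime $Y_t \asymp 1/n$ one bounds the number of involvements by $O(1)$. For the $|A|$ term, the size-biased identity $\ee[|A|] = \ee[\mathbbm{1}(Y_T > 1/n)/Y_T]$ combined with the Gaussian profile for $\log_2(nY_T)$ yields $\ee[|A|]/n = O(1/\sqrt{\log n}) \to 0$, hence $|A|/n \cvp 0$ by Markov.

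\emph{Step 2 (concentration via two ghosts).} To upgrade expectation to convergence in probability, I would bound the variance of $p_T := \pp(Y_T > 1/n \mid x_0,\ldots,x_T)$ using the two-ghost identity
\[
\ee[p_T^2] = \pp(Y_T > 1/n,\ Y'_T > 1/n),
\]
where $Y_T, Y'_T$ are two ghosts sampled independently conditionally on the chain. Two ghosts starting at the same coordinate split with probability $1/2$ whenever both sit on an averaged pair; once at different positions they are jointly involved in an averaging only with probability $O(1/n^2)$ per step. Thus their involvement counts $K_T, K'_T$ are asymptotically independent Binomials, $\pp(Y_T > 1/n,\ Y'_T > 1/n) \to \Phi(-a)^2$, the variance tends to $0$, and Chebyshev's inequality yields $p_T \cvp \Phi(-a)$.

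The main difficulty is the rigorous control of the non-ideal halving in Step 1: one must show quantitatively that partner-value corrections to $\log_2(nY_T)$ do not accumulate to an $\Omega(\sqrt{\log n})$ error, which would wash out the Gaussian limit. A secondary challenge is the decoupling of the two ghosts in Step 2: since they are driven by a common sequence of averagings, one must quantify the rate at which their value processes become essentially independent once their positions have separated.
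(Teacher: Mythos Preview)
Your size-biased ghost is essentially the paper's ``particle model'': the paper imagines $2^{H_n}$ particles, each of which follows the same dynamics as your $I^*_t$, and the key quantity $\alpha_{t,u}$ (number of averagings particle $u$ has seen) is exactly your $K_t$. The lower bound and the two-tracker concentration argument match the paper's Proposition~\ref{weightedestimate} almost line for line, so those parts are fine.

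The gap is in the upper bound, specifically your terminal-regime claim. It is not true that the ghost spends only $O(1)$ involvements in the regime $Y_t\asymp 1/n$. Write $Y_T=2^{-K_T+E_T}$ with $E_T=\sum_k\log_2(1+Z_k/Y_{k-})\ge 0$. While $Y_t\gg 1/n$ your bound on $E_T$ is correct and easy. But once $Y_t$ first drops to $[1/n,2/n]$, the remaining time in the cutoff window is of order $n\sqrt{\log n}$, during which the ghost is involved $\Theta(\sqrt{\log n})$ further times; each such involvement can contribute $\Theta(1)$ to $E_T$ whenever the partner $Z_k$ is comparable to $1/n$. So a priori $E_T$ can be $\Theta(\sqrt{\log n})$, which would wash out the Gaussian window. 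To rule this out you would need to know that \emph{most} partners satisfy $Z_k<(1-\delta)/n$ for some $\delta>0$, i.e.\ that $|\{i:x_{t,i}\ge(1-\delta)/n\}|=o(n)$ uniformly over $t$ in the window---but that is essentially the statement you are trying to prove, and neither your $|A|$ estimate nor the raw $L^2$ bound is strong enough (the latter gives only $|B_t|\lesssim n^{1.28}$ at the cutoff time).

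The paper sidesteps this circularity entirely. It never tries to follow the ghost through the terminal regime. Instead, it uses the particle argument only to show that by time $t(a)-\Theta(n\sqrt{\log n})$ a mass $1-\Phi(-a)-\eta$ already sits in coordinates of size at most $n^{-1+o(1)}$; it then packages that mass into auxiliary averaging processes $x^{(k)}$ with $\|x^{(k)}\|_\infty\le n^{-1+o(1)}$, and finishes with the $L^2$ decay estimate~\eqref{eq:contexplmm} over the remaining $\Theta(n\sqrt{\log n})$ time. That $L^2$ step is exactly the missing ingredient in your outline, and something equivalent to it appears to be unavoidable.
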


\begin{figure}[t]
\centering
\includegraphics[width = .7\textwidth]{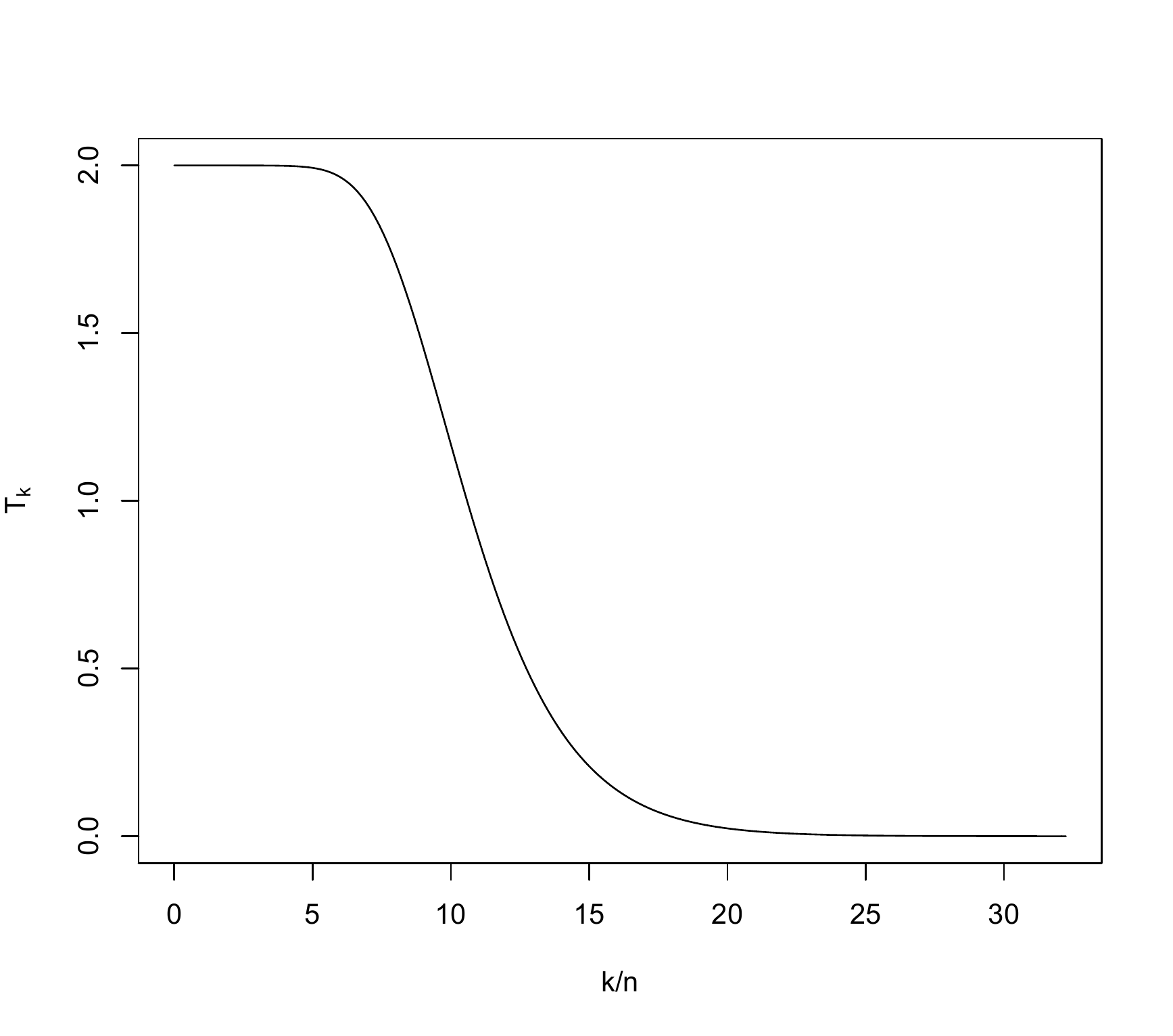}
\caption{Graph of $T(k)$ against $k/n$, with $n=10^7$ and $x_0=(1,0,\ldots,0)$.
\label{averagesfig1}}
\end{figure}
In a first version of this paper, we proved that the mixing time was between $\frac{1}{2}n\log n$ and $n\log n$. To try to determine things we ran a simulation with $n=10^7$. Figure \ref{averagesfig1} shows the result. It is not clear from these numerics if there is a cut off or not. Our results show the cutoff window is unusually large, making it very difficult to see from simulation.

One may also consider different general initial conditions, say, being non-negative with the same total mass; i.e., for initial conditions in the simplex 
\[\bbp_n=\{x_0 = (x_{0,1}, \ldots,x_{0,n}): \min_{1\le i\le n}x_{0,i}\ge 0,\;\; \sum_{i=1}^n x_{0,i}=1 \}.\]
The decay of the $L^1$ distance for some initial conditions could be much faster if initially the mass is more equality distributed, see e.g. Figure \ref{averagesfig2}.
Also, the decay does not necessarily have a cutoff. For example, if the initial condition has a quarter of the coordinates equal to $\frac{2}{n}$, one coordinate equals to $\frac{1}{2}$, and the remaining coordinates are zero, we would expect that there are `two sharp cutoffs', around the beginning and time $\frac{n\log n}{2\log 2}$ respectively.
However, it is natural ask if the initial condition of all mass concentrated on one entry is the worst case. We confirm this by proving the following asymptotic upper bound in probability.
\begin{thm}\label{generalinitthm}
We use $T_{x_0}(k)$ to denote $T(k)$ with initial condition $x_0$. For any $\epsilon>0$, as $n\to\infty$ we have
\[\sup_{x_0\in\bbp_n}\pp[T_{x_0}(\lfloor n(\log_2(n) + a\sqrt{\log_2(n)})/2 \rfloor) > 2\Phi(-a)+\epsilon] \to 0.\]
\end{thm}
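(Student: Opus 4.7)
The plan is to reduce the general case to Theorem~\ref{cutoffthm} via a coupling argument exploiting the linearity of the dynamics. Each averaging step $x \mapsto x'$ is a linear operator on $\bbr^n$, so running $k$ steps under any fixed pair sequence $(I_m,J_m)_{m \le k}$ gives $x_k = M_k x_0$ for a random matrix $M_k$ depending only on the pair sequence. Coupling all initial conditions to use the same pair sequence, if $x_0 = \sum_{i=1}^n x_{0,i} e_i \in \bbp_n$, then $x_k = \sum_{i=1}^n x_{0,i}\, y_k^{(i)}$, where $y_k^{(i)}$ denotes the process started from the standard basis vector $e_i$ under the same pair sequence.

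Since $\sum_i x_{0,i} = 1$ and $\bar{x}_0 = 1/n$, the triangle inequality then yields
\[
T_{x_0}(k) = \sum_{j=1}^n \Bigl|\sum_{i=1}^n x_{0,i}\bigl(y_{k,j}^{(i)} - 1/n\bigr)\Bigr| \le \sum_{i=1}^n x_{0,i}\, \tau_i,
\]
where $\tau_i := T_{e_i}(k)$. By permutation invariance of the dynamics, each $\tau_i$ has the same marginal law as $\tau_1$. Taking $k = \lfloor n(\log_2 n + a\sqrt{\log_2 n})/2\rfloor$, Theorem~\ref{cutoffthm} gives $\tau_1 \cvp 2\Phi(-a)$, and since $y_k^{(i)}$ is a probability vector we have $0 \le \tau_i \le 2$ deterministically, so bounded convergence yields $\ee[\tau_i] \to 2\Phi(-a)$ and $\var(\tau_i) \to 0$.

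The remaining step is to show that $S_{x_0} := \sum_i x_{0,i} \tau_i$ concentrates around its mean uniformly in $x_0 \in \bbp_n$. Writing
\[
\var(S_{x_0}) = \sum_{i,i'=1}^n x_{0,i}\, x_{0,i'}\, \cov(\tau_i,\tau_{i'})
\]
and bounding each covariance by $\sqrt{\var(\tau_i)\var(\tau_{i'})} = o(1)$ via Cauchy--Schwarz gives $\var(S_{x_0}) = o(1)$ uniformly in $x_0$, since $\sum_{i,i'} x_{0,i}\, x_{0,i'} = 1$. Chebyshev's inequality then gives $\pp[S_{x_0} > \ee[S_{x_0}] + \epsilon/2] \to 0$ uniformly in $x_0$, and for $n$ large enough, $\ee[S_{x_0}] = \ee[\tau_1] < 2\Phi(-a) + \epsilon/2$. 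Combined with $T_{x_0}(k) \le S_{x_0}$, this gives the claim.

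The approach is essentially soft: Theorem~\ref{cutoffthm} does the heavy lifting, and the coupling converts a statement about a single initial condition into a statement for all of $\bbp_n$. There is no serious obstacle; the only point requiring mild care is the uniformity in $x_0$ of the concentration estimate, which is automatic because the Cauchy--Schwarz bound reduces everything to marginal variances, and these are equal across coordinates by symmetry.
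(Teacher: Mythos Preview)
Your proof is correct and follows essentially the same approach as the paper: both couple all initial conditions via the same averaging sequence, use linearity plus the triangle inequality to bound $T_{x_0}(k)$ by the convex combination $\sum_i x_{0,i}\,T_{\bbe_i}(k)$, invoke symmetry and Theorem~\ref{cutoffthm} with bounded convergence to control the first two moments, and finish with Cauchy--Schwarz plus Chebyshev. The only cosmetic difference is that the paper bounds the second moment $\ee[(\sum_i x_{0,i}T_{\bbe_i}(k))^2]\le \ee[(T_{\bbe_1}(k))^2]$ directly, while you bound the variance via the covariance form; these are equivalent.
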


Finally one might wonder if $x_{k}$ is ultimately exactly equal to the constant vector $(\bar{x}_0, \bar{x}_0,\ldots,\bar{x}_0)$. We show that for every choice of $x_0$ if and only if $n$ is a power of~$2$. 

\subsection*{Overview of this paper} Section \ref{backsec} contains a literature review pointing out occurrences of  repeated averaging processes in economics, game theory, actuarial science and mathematics. An outline of the proof is in Section \ref{s:sketch}. The proof itself is in Section \ref{proofsec} which gives additional results-in $L^2$, the upper bound under general initial conditions, and a necessary and sufficient condition for repeated averages to become constant is a finite number of steps.

\subsection{Background}\label{backsec}
Bourgain asked this question because of the second author's previous work on the random transpositions Markov chain. This evolves on the symmetric group $S_n$ by repeatedly picking $I$, $J$ uniformly at random and transposing these two labels in the current permutation. In joint work with Shahshahani~\cite{ds81}, we showed $\frac{1}{2}n\log n$ steps are necessary and sufficient for convergence to the uniform distribution in both $L^1$ and $L^2$. Map the symmetric group into the set of $n\times n$ doubly stochastic matrices by sending $(i,j)$ to the matrix 
\begin{equation*}
m_{ab} =
\begin{cases}
\frac{1}{2} &\text{ if } (a,b) = (i,i), (j,j), (i,j), (j,i),\\
1 &\text{ if } a=b\ne i \text{ or } j,\\
0 &\text{ otherwise.}
\end{cases}
\end{equation*}
The successive images of the random transformations are exactly our averaging operators.

It seemed difficult to transform the results for the random transpositions walk into useful results for random averages. It is worth noting that very sharp refinements have recently been proved for transpositions; sharp numerical bounds like
\[
\|Q^{*k} - U\|_{\textup{TV}}\le 2e^{-c} \ \ \text{ for } k = \frac{1}{2}n(\log n + c)
\]
are in \cite{scz} (where $Q^{*k}$ is the law of walk after $k$ steps, $U$ is the uniform distribution on $S_n$, and $\textup{TV}$ is the total variation norm) and even the limiting shape of the error is now understood~\cite{teyssier}. Good results for random $k$-cycles~\cite{bsz} suggest results for averaging over larger random sets which should be accessible with present techniques. Finally, the `shape' of the non-randomness for random transpositions if shuffling only $O(n)$ steps is of current interest because of its connection to spatial random permutations and the `exchange process' of mathematical physics (see~\cite{schramm, berestycki}). 

The $L^2$ convergence of a more general version of our repeated averaging process was studied by Aldous and Lanoue~\cite{al12} a few years ago. Aldous and Lanoue worked on an edge weighted graph with numbers at the vertices. At each step, an edge is picked with probability proportional to its weight and the two numbers on the vertices of the edge are replaced by their average. The `random transpositions' version of this process  was treated in~\cite{dsc2}.

Related processes, under the name of {\it gossip algorithms}, have been studied by Shah~\cite{shah08}. Such processes are also known as {\it distributed consensus algorithms}~\cite{ot09}. The {\it Deffuant model} from the sociology literature is a closely related model where averaging takes place only if the two values differ by less than a specified threshold~\cite{lanchier12, bkr03, haggstrom12}. Acemo\u{g}lu et al.~\cite{acemoglu} analyze a model where some agents have fixed opinions and other agents update according to an averaging process.

When $x_0$ lies in the positive orthant, the behavior of $T(k)$ has an amusing interpretation in terms of a toy model of reduction in wealth inequality in a socialist regime. Suppose that there are $n$ individuals (or entities) in the population, and the coordinates of $x_k$ denote the wealths of these $n$ individuals at time $k$. The socialist regime tries to redistribute wealth by picking two individuals uniformly at random at each time point, and making them equally distribute their wealths among themselves. This is our repeated averaging process. It is not hard to show that at time $k$, $\frac{1}{2}T(k)$ is the amount of wealth that remains to be redistributed to attain perfect equality. This is actually a well-known measure of wealth inequality in the economics literature, known as the Hoover index~\cite{hoover36} or Schutz index~\cite{schutz51}. 

What our results show is that if we start from the initial configuration where one individual has all the wealth, then for a long time this index of inequality does not decrease to any appreciable degree, and then starts decreasing gradually to zero. On the other hand, if we start from a wealth distribution where the wealth of the wealthiest individual is comparable to the average wealth, then the Hoover index decreases much faster, as shown by the following calculation. Suppose that the total wealth is $1$ (so that the average wealth is $1/n$), and the maximum wealth is $C/n$ for some $C\ge 1$.  Then by the results stated before,
\begin{align*}
\ee(T(k)) &\le \ee\biggl[\biggl(n\sum_{i=1}^n (x_{k,i}-\bar{x}_0)^2\biggr)^{1/2}\biggr]\\
&\le \biggl[n\ee\biggl(\sum_{i=1}^n (x_{k,i}-\bar{x}_0)^2\biggr)\biggr]^{1/2}\\
&\le \sqrt{n}\biggl(1-\frac{1}{n-1}\biggr)^{k/2} \biggl(\sum_{i=1}^n (x_{0,i}-\bar{x}_0)^2\biggr)^{1/2}\\
&\le \sqrt{n}e^{-k/2n} \biggl( \frac{C}{n} \sum_{i=1}^n |x_{0,i}-\bar{x}_0|\biggr)^{1/2}\le \sqrt{C} e^{-k/2n}. 
\end{align*}
A numerical example of this second scenario is shown in Figure \ref{averagesfig2}.

\begin{figure}[t]
\centering
\includegraphics[width = .7\textwidth]{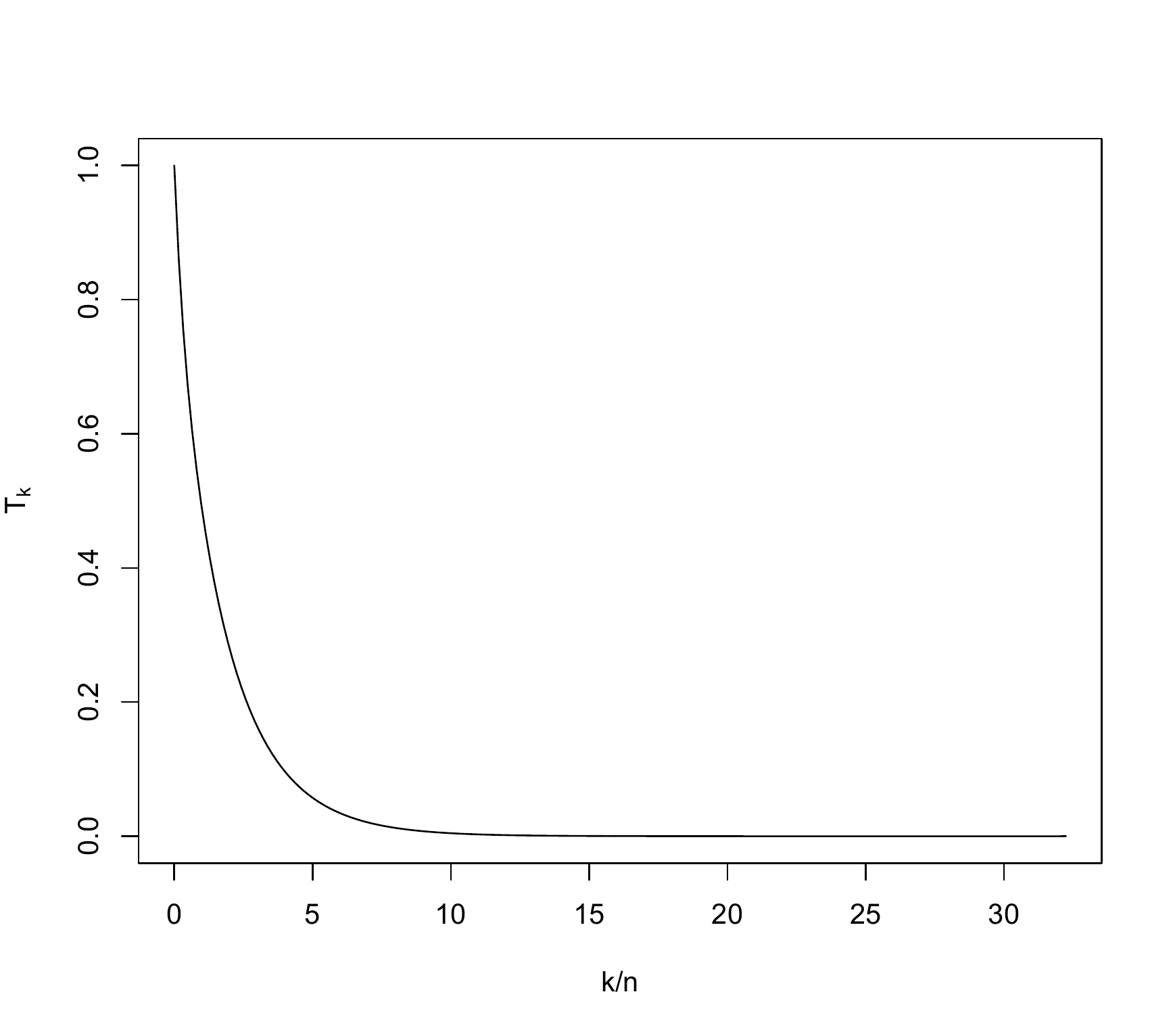}
\caption{Graph of $T(k)$ against $k/n$, with $n=10^7$ and half the coordinates of $x_0$ equal to $2/n$ and the remaining half equal to $0$. \label{averagesfig2}}
\end{figure}

Iterated local averages have a long tradition in the actuarial literature going back to Charles Peirce and de Forest. See \cite{dsc} for a survey. Replacing `averages of $3$' by `median of $3$' gives the `3RSSH smoother'. William Feller~\cite[p.~333, p.~425]{feller68} studies repeated averages for examples of the renewal theorem and Markov chains. An interesting literature on getting experts to reach consensus is surveyed and developed by Chatterjee and Seneta~\cite{cs}. Finally, our work can be set in the space of random walk on the semigroup of doubly stochastic matrices~\cite{hm11}. This subject does not seem to focus on the rates of convergence. The present paper suggests there is much to do. 

One mathematical use of iterated averages appears in summability theory~\cite{hardy}. Let $x_1,x_2,\ldots$ be a real sequence. Let $c^1_n(x) = \frac{1}{n}(x_1+\cdots +x_n)$ --- the first Cesaro average --- and let $c^{k+1}_n(x) = \frac{1}{n}(c^k_1(x)+\cdots c^k_n(x))$. Often $x_n$ is $1$ or $0$ as $n\in A$ or not, where $A\subseteq \{1,2,\ldots\}$. If $\lim c^1_n(x)$ exists this assigns a density to $A$. For $k\ge 1$, it can be shown that if $\{c^{k+1}_n(x)\}_{n=1}^\infty$ has a limit then $\{c^k_n(x)\}_{n=1}^\infty$ has a limit. However, $\liminf c_n^k(x)$ is increasing in $k$ and $\limsup c_n^k(x)$ is decreasing in $k$. If these meet as $k\to \infty$, $\{x_n\}_{n=1}^\infty$ is called $H_\infty$ summable. The sequence
\[
x_n =
\begin{cases}
1 &\text{ if lead digit of $n$ is $1$,}\\
0 &\text{ otherwise}
\end{cases}
\]
is not $c^k$ summable for any $k$ but has $H_\infty$ density $\log_{10}(2) = 0.301...$. For proofs and references see~\cite{diaconis}.

The repeated averaging process has an interesting connection with Schur convexity. The majorization partial order on $\bbr^n$ is defined as follows. Call $x = (x_1,\ldots,x_n) \preceq (y_1,\ldots, y_n) =y$ if $\sum_{i=1}^k x_{(i)} \le \sum_{i=1}^k y_{(i)}$ for all $1\le k\le n$, where $x_{(1)}\ge x_{(2)}\ge \cdots \ge x_{(n)}$ is a decreasing rearrangement of $x_1,\ldots,x_n$ and $y_{(1)}\ge y_{(2)}\ge \cdots \ge y_{(n)}$ is a decreasing rearrangement of $y_1,\ldots,y_n$. If all the entries are nonnegative and sum to $s$, the largest vector is $(s,0,\ldots,0)$ and the smallest vector is $(s/n,s/n,\ldots,s/n)$. An encyclopedic treatise on majorization is in \cite{moa11}.  A function $f:\bbr^n\to \bbr$ is called Schur convex if $x\preceq y$ implies $f(x)\le f(y)$.   It is easy to see that a symmetric convex function is Schur convex and that one moves down in the order by replacing $x_i$, $x_j$ by $(x_i+x_j)/2$, $(x_i+x_j)/2$. Therefore the relevance for the present paper is clear: each step of the Markov chain moves down in the order. Moreover, for any symmetric convex function $f$,  $f(x_{k+1})\le f(x_k)$ for all $k$. Thus, for instance, $\sum_{i=1}^n |x_{k,i}|^p$ is monotone decreasing in $k$ for any $p\ge 1$. 

The repeated averaging process has similarities with two familiar Markov chains. The first is the Kac walk --- a toy model for the Boltzmann equation that is widely studied in the physics and probability literatures. The walk proceeds on the unit sphere $\mathbf{S}^{n-1} = \{x\in \bbr^n: \sum_{i=1}^n x_i^2 = 1\}$. From $x\in \mathbf{S}^{n-1}$, choose distinct $I$ and $J$ uniformly at random and replace $x_I$ and $x_J$ by $x_I \cos \theta + x_J \sin \theta$ and $-x_I \sin\theta + x_J \cos \theta$, with $\theta$ chosen uniformly from $[0,2\pi)$. This is a surrogate for `random particles collide and exchange energy at random'. This Markov chain has a uniform stationary distribution on $\mathbf{S}^{n-1}$. Following a long series of improvements, the current best results on the rate of convergence of this walk are due to Pillai and Smith~\cite{ps17}. They show that order $n\log n$ steps are necessary and sufficient for mixing in total variation distance (indeed, $\frac{1}{2}n\log n$ is not enough and $200n\log n$ is enough). Aside from differences in state space and dynamics, the Kac walk has a uniform stationary distribution while repeated averaging is absorbing at a single point.

The second Markov chain that is similar to repeated averaging is the Gibbs sampler for the uniform distribution on the simplex $\Delta_{n-1} = \{x \in \bbr^n: x_i\ge 0 \ \forall i, \ x_1+\cdots +x_n =1\}$. 
The explicit description of this chain is as follows. From $x\in \Delta_{n-1}$, choose $I$ and $J$ uniformly at random and replace $x_I$, $x_J$ by $x_I'$, $x_J'$, with $x_I'$ chosen uniformly from $[0, x_I+x_J]$ and $x_J' = x_I+x_J - x_I'$. Settling a conjecture of Aldous, Aaron Smith~\cite{smith14} showed that order $n\log n$ steps are necessary and sufficient for convergence in total variation. Cutoff remains an open problem.

\subsection{Proof Sketch}\label{s:sketch}
The proof of the expected $L^2$ distance in equation~\eqref{eq:L2bound} is given by a direct computation in Proposition~\ref{explmm}.  One might wonder why this does not give the sharp $L^1$ upper bound.  The reason is that at time $(1-\epsilon)n\log_2 n$ a small fraction of the co-ordinates which have $o(1)$ of the total mass have large enough values that they give the dominant contribution to the $L^2$ norm while making a negligible contribution to the $L^1$ norm.  A similar phenomena happens when analysing the mixing time of random walks on random graphs such as random $d$-regular and Erdos-Renyi random graphs where standard spectral methods overestimate the mixing time.  It is from this analogy to random walks that our proof of Theorems~\ref{thm:main} and~\ref{cutoffthm} draws inspiration.

Let us explain this further in the case of random $d$-regular graphs where at time $\frac{d}{d-2}\log_{d-1}n$ the random walk has $L^1$ cutoff~\cite{LS:10}.  By the locally treelike nature of the graph, the walk can be coupled with a random walk on a $d$ regular tree up to time $(1-o(1))\frac{d}{d-2}\log_{d-1}n$.  This is the time at which the walk reaches the diameter of the graph $\log_{d-1}n$.  There is, however, a large deviation event that the walk does not move as far away from the root and that it only travels $(1-\delta)\log_{d-1}n$.  Since the distance from the starting location is a biased random walk, one can check that the probability of this event is roughly $e^{-c\delta^2\log n}$.  There are $n^{1-\delta}$ vertices at distance $(1-\delta)\log_{d-1}n$ so the transition probability to a such vertex $n^{-1+\delta-c\delta^2 +o(1)}$.  Their total contribution to the $L^2$ norm is thus $n^{1-\delta} (n^{\delta-c\delta^2 +o(1)})^2n^{-1}=n^{\delta-2c\delta^2+o(1)}$.  For small positive $\delta$ this diverges and so the contribution from this rare event blows up the $L^2$ norm.

The above observation suggest that one should study the walk conditional that it travels the typical distance from the origin.  This approach was introduced in~\cite{BLPS:18} to prove cutoff on the Erdos-Renyi random graph from almost all starting points. At the typical mixing time, it was shown that the distribution at time $(1-o(1))t_{mix}$ could be coupled to one with $L^\infty$ distance $n^{o(1)}$ from the stationary distribution.  This gives an $L^2$ bound of $n^{o(1)}$ and  using the spectral gap this can be reduced to $o(1)$ with a further time $o(\log n)$ establishing cutoff.

The repeated averaging process can be treated in a similar two step approach.  Just as the random walk on the random graphs initially can be coupled with a random walk on a tree, the repeated averaging process can initially be coupled with a fragmentation process where particles split exactly in two at rate $1/n$.  We will make this coupling even more explicit by Poissonizing the repeated averaging process, randomizing the number of particles.  The cutoff location is not when the number of particles becomes order $n$ but when most of the mass is in particles of size $n^{-1+o(1)}$.  This involves taking a size biased approach to the analysis.

We will now sketch how we carry out this analysis.  We imagine that the initial mass of 1 actually consists of a pile of $2^{H_n}$ individual particles each of mass $2^{-H_n}$ where $2^{H_n}=n^{1-o(1)}$.  When an averaging occurs we split the pile of particles in two.  When a single particle is to be split or when two non-zero piles are to be averaged we instead discard all these particles.  Hence a particle is in at most $H_n$ averagings.  At time $(\tfrac12-o(1))n\log_2 n$ we show that most particles are in piles of at most $n^{o(1)}$ by tracking how many averagings a given particle has participated in.  At this time particles in bigger piles are then also discarded and we show that only a negligible fraction of the particles are discarded.  

If we take the final set of particles we get a weight vector $w_t$ whose maximal value is bounded by $n^{-1+o(1)}$ and whose $L^1$ distance to $x_t$ is $o(1)$.  The argument is then completed by applying the $L^2$ analysis to the vector $w_t$ with its $L^2$ norm becoming $o(1/\sqrt{n})$  in additional time $o(\log n)$ giving an $L^1$ distance of $o(1)$ and establishing the upper bound of Theorem~\ref{thm:main}.  The lower bound on $T(k)$ can be recovered directly from the fragmentation process as before time $\tfrac12 n\log_2 n$ most of the mass is in piles of size much larger than~$n^{-1}$.

Our actual analysis is a little more complicated in order to establish the cutoff window and Theorem~\ref{cutoffthm}.  Essentially we split particles in groups according to when their pile becomes small and apply $L^2$ separately to each.  The number of averagings that a particle has participated in is Poisson and so its fluctuations are given by the Central Limit Theorem.  This explains the CDF in the statement of Theorem~\ref{cutoffthm}.

\subsection{Acknowledgments}\label{acksec}
We thank David Aldous, Laurent Miclo, Evita Nestoridi and Perla Sousi for comments. Our revival of this project stems from a quantum computing question of Ramis Movassagh --- roughly, to develop a convergence result with $\begin{bsmallmatrix}\frac{1}{2} & \frac{1}{2}\\ \frac{1}{2} &\frac{1}{2}\end{bsmallmatrix}$ replaced by the parallel quantum spin operator --- we hope to develop this in joint work with him. Finally, we acknowledge the great, late Jean Bourgain. He sent us a typed draft of a preliminary manuscript. Alas, 30 years later, this proved difficult to find. We will be grateful if a copy can be located. 

\section{Proofs}\label{proofsec}
Define a Markov chain $\{x_k\}_{k=0}^\infty$ as in Section \ref{introsec}. It is not difficult to see that without loss of generality, we can assume $\bar{x}_0=0$. We will work under this assumption throughout this section.
\subsection{Decay of expected $L^2$ distance}
For each $k$, let
\[
S(k) := \sum_{i=1}^n x_{k,i}^2. 
\]
Let $\mf_k$ be the $\sigma$-algebra generated by the history up to time $k$. 
\begin{prop}\label{explmm}
For any $k\ge 0$, 
\[
\ee(S(k+1)|\mf_{k}) =\biggl(1-\frac{1}{n-1}\biggr) S(k).
\]
\end{prop}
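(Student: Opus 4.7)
The plan is to compute the one-step change $S(k+1)-S(k)$ explicitly for each choice of pair $(I,J)$, and then average over the uniform choice of pair.

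First I would observe that if the chosen pair is $(I,J)$, then only coordinates $I$ and $J$ change, so
\[
S(k+1) - S(k) = 2\left(\frac{x_{k,I}+x_{k,J}}{2}\right)^2 - x_{k,I}^2 - x_{k,J}^2 = -\frac{1}{2}(x_{k,I}-x_{k,J})^2.
\]
Taking conditional expectation, with $(I,J)$ uniform over the $n(n-1)$ ordered pairs of distinct indices,
\[
\ee(S(k+1)-S(k)\mid \mf_k) = -\frac{1}{2n(n-1)}\sum_{i\neq j}(x_{k,i}-x_{k,j})^2.
\]

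Next I would evaluate the double sum. Expanding and including the diagonal terms (which vanish),
\[
\sum_{i,j=1}^n (x_{k,i}-x_{k,j})^2 = 2n\sum_{i=1}^n x_{k,i}^2 - 2\biggl(\sum_{i=1}^n x_{k,i}\biggr)^2.
\]
Because the averaging step preserves the total sum, and we have reduced to $\bar{x}_0=0$, we have $\sum_i x_{k,i}=0$ for all $k$. Hence the right-hand side equals $2nS(k)$, and therefore $\sum_{i\neq j}(x_{k,i}-x_{k,j})^2 = 2nS(k)$.

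Combining the two displays gives
\[
\ee(S(k+1)-S(k)\mid \mf_k) = -\frac{2nS(k)}{2n(n-1)} = -\frac{S(k)}{n-1},
\]
which rearranges to the claimed identity. There is really no obstacle here; the only thing to be careful about is tracking that the mean is preserved (allowing the reduction to $\bar{x}_0=0$) and handling the ordered vs.\ unordered enumeration of pairs consistently. Iterating this one-step identity and taking unconditional expectations immediately yields the formula \eqref{eq:L2bound} stated in the introduction.
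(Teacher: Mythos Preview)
Your proof is correct. It differs from the paper's in its decomposition: the paper fixes a coordinate $i$, computes $\ee(x_{k+1,i}^2\mid\mf_k)$ by conditioning on whether $i$ is selected and on the identity of its partner, simplifies using $\sum_{j\ne i}x_{k,j}=-x_{k,i}$, and then sums over $i$. You instead fix the pair $(I,J)$, observe that the global increment is $-\tfrac12(x_{k,I}-x_{k,J})^2$, and average over pairs, reducing everything to the standard identity $\sum_{i\ne j}(x_{k,i}-x_{k,j})^2=2nS(k)$ when the coordinate sum vanishes. Your route is shorter and avoids the intermediate algebra; the paper's route yields the slightly finer per-coordinate relation $\ee(x_{k+1,i}^2\mid\mf_k)=\bigl(1-\tfrac{3}{2(n-1)}\bigr)x_{k,i}^2+\tfrac{1}{2n(n-1)}S(k)$ along the way, though that extra information is not used elsewhere.
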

\begin{proof}
For any $i$, the probability that it is one of the chosen coordinates is $2/n$. If it is chosen, then the other coordinate is uniformly chosen among the remaining coordinates. Therefore
\begin{align*}
&\ee(x_{k+1,i}^2 |\mf_{k}) = \biggl(1-\frac{2}{n}\biggr) x_{k,i}^2 + \frac{2}{n(n-1)}\sum_{1\le j\le n, \, j\ne i}\biggl(\frac{x_{k,i}+x_{k,j}}{2}\biggr)^2\\
&=  \biggl(1-\frac{2}{n}\biggr) x_{k,i}^2 + \frac{1}{2n}x_{k,i}^2 + \frac{1}{2n(n-1)}\sum_{1\le j\le n, \, j\ne i}(x_{k,j}^2 + 2x_{k,i}x_{k,j}).
\end{align*}
Since $\sum_{i=1}^n x_{k,i}=0$, we have 
\[
\sum_{1\le j\le n, \, j\ne i} x_{k,j} = -x_{k,i}.
\]
Thus, we get the further simplification
\begin{align*}
&\ee(x_{k+1,i}^2 |\mf_{k}) \\
&=  \biggl(1-\frac{2}{n}+\frac{1}{2n}-\frac{1}{n(n-1)}\biggr)x_{k,i}^2 + \frac{1}{2n(n-1)}\sum_{1\le j\le n, \, j\ne i}x_{k,j}^2\\
&= \biggl(1-\frac{2}{n}+\frac{1}{2n}-\frac{1}{n(n-1)}-\frac{1}{2n(n-1)}\biggr)x_{k,i}^2 + \frac{1}{2n(n-1)}S(k)\\
&= \biggl(1-\frac{3}{2(n-1)}\biggr)x_{k,i}^2 + \frac{1}{2n(n-1)}S(k).
\end{align*}
Summing over $i$, we get the required identity.
\end{proof}
\begin{cor}\label{expcor}
Let $\tau := 1-\frac{1}{n-1}$. Then  $\ee(S(k)) = \tau^k S(0)$. Moreover, $\lim S(k)/\tau^k$ exists and is finite almost surely.
\end{cor}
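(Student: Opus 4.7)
The plan is to obtain the first assertion by iterating the one-step identity from Proposition~\ref{explmm}, and to obtain the second assertion by recognising $S(k)/\tau^k$ as a non-negative martingale and applying the martingale convergence theorem.

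First I would take expectations on both sides of the conclusion of Proposition~\ref{explmm}. Using the tower property,
\eq{
\ee(S(k+1)) = \ee(\ee(S(k+1)\mid \mf_k)) = \tau\, \ee(S(k)),
}
so a trivial induction on $k$ with base case $\ee(S(0))=S(0)$ yields $\ee(S(k))=\tau^k S(0)$.

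Next, set $M_k := S(k)/\tau^k$. Since $S(k)\ge 0$ and $\tau\in(0,1)$ (for $n\ge 3$), each $M_k$ is a non-negative $\mf_k$-measurable random variable with $\ee(M_k)=S(0)<\infty$. Dividing the identity in Proposition~\ref{explmm} by $\tau^{k+1}$,
\eq{
\ee(M_{k+1}\mid\mf_k) = \frac{\ee(S(k+1)\mid\mf_k)}{\tau^{k+1}} = \frac{\tau S(k)}{\tau^{k+1}} = \frac{S(k)}{\tau^k} = M_k,
}
so $(M_k)_{k\ge 0}$ is a non-negative martingale with respect to $(\mf_k)$. By Doob's martingale convergence theorem, $M_k$ converges almost surely to a finite non-negative random variable $M_\infty$, which is exactly the claim that $\lim S(k)/\tau^k$ exists and is finite almost surely.

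There is no genuine obstacle here; the only point worth noting is that one should not expect $M_k\to M_\infty$ in $L^1$ without additional work (although the uniform bound $\ee(M_k)=S(0)$ suffices for almost sure convergence via the martingale convergence theorem, uniform integrability would need a second moment estimate on $S(k)$). Since the statement only asks for almost sure finiteness of the limit, the martingale convergence theorem delivers the result directly.
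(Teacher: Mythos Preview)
Your proof is correct and follows exactly the same approach as the paper: induction via the tower property for the first claim, and recognising $M_k:=S(k)/\tau^k$ as a nonnegative martingale to invoke the martingale convergence theorem for the second. The paper's proof is essentially a two-line version of yours, with the same definition of $M_k$.
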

\begin{proof}
The claim  $\ee(S(k)) = \tau^k S(0)$ is immediate by Proposition \ref{explmm} and induction. 
Proposition \ref{explmm} also shows that $M_k := S(k)/\tau^k$ is a nonnegative martingale, and so its limit  exists and is finite almost surely. 
\end{proof}

Incidentally, the argument works for more general methods of averaging. For example, if $x_I$ and $x_J$ are replaced by $\theta x_I + \bar{\theta} x_J$ and $\bar{\theta} x_I + \theta x_J$, where $0<\theta<1$ and $\bar{\theta} = 1-\theta$, Proposition \ref{explmm} becomes 
\[
\ee(S(k+1)|\mf_k) = \biggl(1-\frac{4\theta \bar{\theta}}{n-1}\biggr)S(k).
\]
Corollary \ref{expcor} shows that $S(k)$ is small when $k/n\gg 1$. 

\subsection{Cutoff in decay of $L^1$ distance}
Let us now investigate the decay of the $L^1$ norm of $x_k$. 
Throughout this subsection we assume that the starting state is $x_0=(1-\frac{1}{n},-\frac{1}{n},\ldots,-\frac{1}{n})$.
For each $k$, let 
\[
T(k) := \sum_{i=1}^n |x_{k,i}|.
\]
In Theorems~\ref{thm:main} and~\ref{cutoffthm} we claim that the  `cutoff phenomenon' holds for the decay of $T(k)$, around $k = \frac12 n\log_2(n)$ with cutoff window of order of $n\sqrt{\log(n)}$.  Note that $T(k)$ is decreasing in $k$ and so Theorem~\ref{cutoffthm} implies Theorem~\ref{thm:main}.

It will be more convenient for the proof to work in continuous time $\bbr_+$ instead and so we introduce a rate 1 Poisson clock on $\bbr_+$, so that when the clock rings at time $t$ we uniformly choose two different coordinates and replace them by their average.
This is equivalent to giving each pair of coordinates an independent Poisson clock with rate $\binom{n}{2}^{-1}$, for the times at which the pairs are averaged.
For any time $t\in \bbr_+$, we denote $x_t'=(x_{t,1}',\ldots, x_{t,n}')$, $T'(t)$ and $S'(t)$ to be the corresponding quantities for the continuous time model. Similarly to the discrete case, $T'(t)$ and $S'(t)$ are decreasing in $t$ and analogously to Proposition~\ref{explmm} we have that for $s<t$,
\begin{equation}\label{eq:contexplmm}
\ee(S'(t)|\mf_{s}) =\exp\biggl(-\frac{t-s}{n-1}\biggr)S'(s).
\end{equation}

The following theorem is the continuous time version of Theorem \ref{cutoffthm}.
\begin{thm}   \label{continuouscutoffthm}
Take $\Phi:\bbr \rightarrow [0, 1]$ as in Theorem \ref{cutoffthm}.
For any $a \in \bbr$, as $n\rightarrow \infty$ we have
$T'(n(\log_2(n) + a\sqrt{\log_2(n)})/2) \cvp 2\Phi(-a)$.
\end{thm}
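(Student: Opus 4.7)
The plan is to implement the size-biased fragmentation coupling described in Section~\ref{s:sketch}, with the Gaussian CDF emerging from a Poisson central limit theorem applied to the depth of a typical particle. Fix $a\in\bbr$, let $t_0 := \tfrac12 n(\log_2 n + a\sqrt{\log_2 n})$ and $\mu_n := 2t_0/n$. Choose $H_n$ with $2^{H_n} = \lfloor n/(\log n)^C\rfloor$ for a large constant $C$, and view the initial mass $1$ at coordinate $1$ as $N := 2^{H_n}$ equal ``particles'' each of mass $2^{-H_n}$. Build a coupled fragmentation-with-death process: at each Poisson event touching $\{i,j\}$, if exactly one of $i,j$ carries a pile of particles, split that pile by an independent fair coin toss on each of its particles (one half stays at $i$, the other moves to $j$, and both inherit the parent's depth plus one); if both carry piles, or if a singleton pile is asked to split, mark every particle in the offending pile(s) as \emph{dead}. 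Let $w_t$ be the weight vector obtained by summing only alive-particle masses at each coordinate, and let $y_t := x'_t + \tfrac1n\mathbf{1}$ be the process started from $(1,0,\ldots,0)$, so $w_{t,i} \le y_{t,i}$ pointwise.

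For an alive particle $p$, let $D_p(t)$ denote the depth of its pile. Since $p$'s current coordinate is touched at rate $(n-1)\binom{n}{2}^{-1}=2/n$ and each pre-death event there is a split, $D_p(t)\sim\mathrm{Poisson}(2t/n)$ conditional on survival. A Poisson CLT then gives, for each fixed $\ell\in\bbr$,
\begin{equation*}
\pp\bigl[D_p(t_0) < \log_2 n + \ell\sqrt{\log_2 n}\bigr] \;\to\; \Phi(\ell-a).
\end{equation*}
Using $\sum_i x'_{t,i}=0$, a short accounting calculation yields the identity $T'(t) = 2\bigl(M_+(t) - K_+(t)/n\bigr)$, with $M_+(t):=\sum_{i:\,y_{t,i}>1/n}y_{t,i}$ and $K_+(t):=\#\{i:\,y_{t,i}>1/n\}$. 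The theorem is thus equivalent to $M_+(t_0) - K_+(t_0)/n \cvp \Phi(-a)$.

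The main obstacle is controlling deaths. Type~(i) deaths (singleton splits) can happen only once $D_p \ge H_n = \log_2 n - C\log_2\log n$; since $\mu_n - H_n \sim C\log_2\log n + a\sqrt{\log_2 n}$, a Poisson tail bound shows this probability is $O((\log n)^{-c(C)})$, tunable by $C$, so the expected fraction of type-(i)-dead particles is $o(1)$. Type~(ii) deaths (two live piles colliding) are more delicate: the fragmentation-limit depth profile solves $dK_j/dt = (4/n)K_{j-1}-(2/n)K_j$ with $K_j(t) = e^{-2t/n}(4t/n)^j/j!$, and the size-biased rate at which a tagged live particle $p$ collides with another live pile by time $t_0$ can be bounded by an integral of the $K_j$'s weighted by pile mass. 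The key point --- analogous to the analysis of random walks on locally treelike graphs --- is that collisions overwhelmingly involve small-mass piles, so although the number of collisions may be large, the \emph{total dead mass} is $o(1)$ in probability, giving $\|y_{t_0}-w_{t_0}\|_1 = o(1)$ with high probability.

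With deaths controlled I tackle the two bounds. \emph{Lower bound:} restrict to alive piles of depth $< \log_2 n - \log_2\log_2 n$; their number is at most $n/\log_2 n$, so their contribution to $K_+/n$ is $o(1)$, while in the fragmentation limit their total mass equals $\pp[D_p(t_0) < \log_2 n - \log_2\log_2 n] \to \Phi(-a)$; a second-moment bound across the $N$ particles delivers the required concentration, and $y_{t_0,i}\ge w_{t_0,i}$ gives $T'(t_0)\ge 2\Phi(-a)-o(1)$ in probability. \emph{Upper bound:} one has $T'(t_0)\le 2M_+(t_0)\le 2M_+(w_{t_0}) + 2\|y_{t_0}-w_{t_0}\|_1$, and in the fragmentation limit $\ee[M_+(w_{t_0})]\to\Phi(-a)$. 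The subtle point is securing $o(1)$ fluctuations for $M_+(w_{t_0})$ inside the sharp window of width $\sqrt{\log n}$: following the sketch, partition the particles into cohorts indexed by the first time their pile drops below mass $n^{-1+\epsilon}$, apply the $L^2$ decay \eqref{eq:contexplmm} separately to each cohort run for an additional $o(n\sqrt{\log n})$ time units, and sum. The Poisson law of $D_p$ makes the cohort masses approximately Gaussian, which is precisely how the CDF $\Phi$ enters the limit. Combining the two bounds yields $T'(t_0)\cvp 2\Phi(-a)$.
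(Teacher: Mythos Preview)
Your overall architecture matches the paper's, and your lower bound is essentially correct. But the upper bound has a genuine gap: the assertion that type-(i) deaths (singleton splits) occur with probability $O((\log n)^{-c(C)})$ is false. With $H_n=\log_2 n - C\log_2\log n$ and $\mu_n=\log_2 n+a\sqrt{\log_2 n}$, the relevant gap is $H_n-\mu_n=-a\sqrt{\log_2 n}-C\log_2\log n$, which is $-a+o(1)$ standard deviations of $\mathrm{Poisson}(\mu_n)$. The CLT therefore gives
\[
\pp\bigl[D_p(t_0)>H_n\bigr]\;\longrightarrow\;1-\Phi(-a),
\]
a positive constant; increasing $C$ only perturbs a lower-order $\log\log$ term and cannot make this $o(1)$. (Indeed this is unavoidable: the limiting mass that has been split more than $H_n$ times must be $1-\Phi(-a)$, which is precisely why $\Phi(-a)$ shows up in the theorem.) Consequently $\|y_{t_0}-w_{t_0}\|_1\to 1-\Phi(-a)$, not $o(1)$, and your displayed inequality $T'(t_0)\le 2M_+(w_{t_0})+2\|y_{t_0}-w_{t_0}\|_1$ only yields the trivial bound $2$.

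The cohort decomposition you mention at the end is not merely a variance-reduction device for $M_+(w_{t_0})$; it is the entire upper-bound argument, and it has to be organised differently. In the paper, one fixes a threshold just above $n^{-1}$ (at $2^{-\log_2 n+\delta\sqrt{\log_2 n}}$) and a grid of times $t[1]<\cdots<t[m]<t(a)$; cohort $k$ consists of the particles whose pile \emph{first} drops below the threshold in $(t[k-1],t[k]]$, captured \emph{before} any type-(i) death. One shows (this is the content of Lemma~\ref{lowerboundpart}) that the cohorts together absorb mass $\ge 1-\Phi(-a)-\eta$, so the residual $r_i:=y_{t(a),i}-\sum_k x^{(k)}_{t(a),i}\ge 0$ has $\sum_i r_i\le \Phi(-a)+\eta$ and hence $\sum_i|r_i-\bar r|\le 2\Phi(-a)+2\eta$. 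Each cohort has $L^\infty$ norm $\le 2^{\delta\sqrt{\log_2 n}}/n$ at its birth time and is then run for at least $t(a)-t[m]=\delta n\sqrt{\log_2 n}$ further time units, so \eqref{eq:contexplmm} kills its $L^1$ distance to uniform. This is what replaces your broken inequality. Your type-(ii) death bound, incidentally, can be done by the crude estimate the paper uses (at most $2^{H_n}$ occupied sites, so collision rate $\le 2^{H_n}\binom{n}{2}^{-1}$, integrated over $t(a)$); with your $H_n$ this gives $(\log n)^{1-C}\to 0$ for $C>1$, and no size-biased integral is needed.
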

It is not hard to see that Theorem \ref{cutoffthm} and Theorem \ref{continuouscutoffthm} are equivalent,
due to concentration of Poisson random variables, and the fact that $T(k)$ decreases in $k$.

For the convenience of notations, for the rest of this section we denote $t(a)=t(n,a):=n(\log_2(n) + a\sqrt{\log_2(n)})/2$.

Now we prove Theorem \ref{continuouscutoffthm}.
To analyze the evolution of $x_t'$, we couple it to a simpler repeated averaging process.
\begin{defn}
We define $w_t=(w_{t,1}, \ldots, w_{t,n})$, for $t\in \bbr_+$ coupled with the process $x_t'$ as follows.
We let $w_0=(1,0,\ldots, 0)$, and let it evolve by repeated averages using the the same Poisson point processes as $x_t'$.
In addition, at any time $t$, if $w_{t,i}$ and $w_{t,j}$ are chosen to average, with both $w_{t,i}, w_{t,j}>0$, we replace both $w_{t,i}$ and $w_{t,j}$ by zero.  Furthermore, let $H_n:=\lfloor\log_2(n)- (\log_2(n))^{1/3}\rfloor$, if $w_{t,i}=w_{t,j}<2^{-H_n}$ after the average, we also replace both of them by zero.
\end{defn}
Under this construction, by induction in time we always have that $w_{t,i}\leq x_{t,i}'+\frac{1}{n}$, and each nonzero $w_{t,i}$ is a (non-positive) integer power of $2$.
To analyze this $w_t$ process, we further define another process called the `particle model'.

\begin{defn}
Consider $2^{H_n}$ particles indexed by $u \in \{1, \ldots, 2^{H_n}\}$.
Let $\{e_{t,u}\}$ be the label of particle $u$ at time $t$ as they evolve over time, and take values in $\{0,1,\ldots,n\}$.  We set $e_{0,u}=1$ for all $u \in \{1, \ldots, 2^{H_n}\}$ as the initial values. Location 0 will correspond to a graveyard site for removed particles.
We further denote $P_{t,i}:=\{u: e_{t,u}=i\}$.
Then these sets $\{P_{t,i}\}_{t\in\bbr_+, 0\leq i \leq n}$ encode the same information as $\{e_{t,u}\}_{t\in\bbr_+, 1\leq u \leq 2^{H_n}}$.

Given the same Poisson clocks as in the repeated averaging process we define the evolution of the particles according to them,
to ensure that there is always $|P_{t,i}|=2^{H_n}w_{t,i}$.
Suppose at time $t$ the clock rings for edge $(i,j)$, then we apply a `splitting' in the particle model as follows.
If $\{|P_{t-,i}|, |P_{t-,j}|\}=\{2L, 0\}$, for some $L\in \bbz_+$, we uniformly randomly divide the particles such that half remain and the other half move to the other location giving $|P_{t,i}|=|P_{t,j}|=L$.

In all other cases, i.e. both $|P_{t-,i}|, |P_{t-,j}|$ are positive, or one of them is one,
the particles are discarded to location 0 and we have that $|P_{t,i}|=|P_{t,j}|=0$.
\end{defn}

By induction we always have that $|P_{t,i}|$ is power of $2$, so it is odd only when it equals $1$.  Moreover, the coupling with the repeated averaging process satisfies $|P_{t,i}|=2^{H_n}w_{t,i}$.  Using this particle model, we prove the following `weighted estimate' on the process $w_t$.

We define $p_{t,i} \in \bbz_{\geq 0}$ such that $w_{t,i}=2^{-p_{t,i}}$, if $w_{t,i}\neq 0$;
otherwise we let $p_{t,i} = \infty$.
For simplicity of notations we also let $\op_{t,u}=p_{t,e_{t,u}}$ if $e_{t,u}\neq 0$, and $\op_{t,u}=\infty$ otherwise.
\begin{prop}   \label{weightedestimate}
Take $a\in\bbr$ and $\delta \geq 0$, then we have as $n\rightarrow \infty$,
\[
\sum_{i=1}^n w_{t(a),i}\1[p_{t(a),i} \leq \log_2(n) - \delta\sqrt{\log_2(n)}] \cvp \Phi(-a-\delta).
\]
\end{prop}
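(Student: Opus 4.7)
The plan is to reduce the claim to a single- and two-particle computation in the particle model, then apply a Poisson CLT to the first moment and a second-moment estimate to control pairwise correlations.

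\textbf{Reformulation.} Since $|P_{t,i}|=2^{H_n}w_{t,i}$ and $\op_{t,u}=p_{t,e_{t,u}}$, writing $M:=\log_2 n-\delta\sqrt{\log_2 n}$ gives
\[
\sum_{i=1}^n w_{t(a),i}\1[p_{t(a),i}\le M]=2^{-H_n}\sum_{u=1}^{2^{H_n}}\1[\op_{t(a),u}\le M].
\]
Set $X_u:=\1[\op_{t(a),u}\le M]$. By Chebyshev it suffices to verify (i) $\ee X_u\to\Phi(-a-\delta)$, and (ii) $\ee[X_uX_{u'}]=\Phi(-a-\delta)^2+o(1)$ for all $u\ne u'$.

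\textbf{Single-particle CLT.} Let $K(s)$ be the number of non-empty piles at time $s$. Since each non-empty pile contains at least one of the $2^{H_n}\le n\cdot 2^{-(\log_2 n)^{1/3}}$ particles, $K(s)/n=o(1)$ uniformly in $s$. Conditional on the history, the pile containing $u$ splits---incrementing $\op_{t,u}$ by one---at rate $\tfrac{2(n-K(s))}{n(n-1)}=\tfrac{2}{n}(1+o(1))$, and while this pile has size at least two, $u$ is discarded at rate at most $\tfrac{2(K(s)-1)}{n(n-1)}=O(n^{-1}\cdot 2^{-(\log_2 n)^{1/3}})$. Integrating over $[0,t(a)]$, the total discard probability is $o(1)$ and the cumulative split rate is $\log_2 n+a\sqrt{\log_2 n}+o(1)$. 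A standard coupling of the doubly-stochastic split process with a homogeneous Poisson process of rate $2/n$ then shows $\op_{t(a),u}$ is within total-variation $o(1)$ of $\mathrm{Poisson}(\log_2 n+a\sqrt{\log_2 n})$, and the normal CLT yields
\[
\pp(\op_{t(a),u}\le M)=\Phi\!\left(\frac{M-(\log_2 n+a\sqrt{\log_2 n})}{\sqrt{\log_2 n+a\sqrt{\log_2 n}}}\right)+o(1)\to\Phi(-a-\delta).
\]
When $M\ge H_n$ (small $\delta$), the event $X_u=1$ coincides with $u$ being alive at time $t(a)$; replacing $M$ by $H_n$ in the display above gives the same limit because $H_n-\log_2 n=o(\sqrt{\log_2 n})$.

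\textbf{Pairwise decorrelation.} Fix distinct $u,u'$. They begin in the same pile, and at each subsequent split of their common pile of size $2L$ they land in opposite halves with probability $L/(2L-1)\ge 1/2$. Hence the probability they remain co-located after $k$ common-pile splits is at most $2^{-k}$; since $\Omega(\log n)$ such splits occur before time $t(a)$ with probability $1-o(1)$ by the single-particle estimate, their separation time $\tau$ satisfies $\tau<t(a)$ with probability $1-o(1)$. After $\tau$, with $u$ in pile $i$ and $u'$ in pile $j$, the pair clocks that can affect these two piles are disjoint except for the single edge $(i,j)$, of rate $\tfrac{2}{n(n-1)}$. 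The identities of the current piles change over time, but the instantaneous rate of the ``shared'' pair clock always equals $\tfrac{2}{n(n-1)}$, so by Markov the probability it fires at any point in $[0,t(a)]$ is $O(\log n/n)=o(1)$. On the complement of this collision event, the post-$\tau$ evolutions of $u$ and $u'$ are measurable with respect to disjoint Poisson clock families, hence independent; combined with (i) this gives $\ee[X_uX_{u'}]=\Phi(-a-\delta)^2+o(1)$.

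\textbf{Main obstacle.} The delicate step is the post-separation decorrelation: although the shared pair clock has negligible instantaneous rate, the piles containing $u$ and $u'$ evolve stochastically, so the ``shared-clock'' event is not a fixed small-rate Poisson event in a static filtration. I would handle this by coupling the particle model to an auxiliary process in which each particle carries its own independent rate-$2/n$ splitting clock, and by enumerating all events that break the coupling---current-pile collisions, discards from size-one piles, and other rare excursions of $K(s)$---then showing each has probability $o(1)$ uniformly over the $\binom{2^{H_n}}{2}$ particle pairs.
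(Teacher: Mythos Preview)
Your approach---rewriting the weighted sum as an average of the indicators $X_u$ over particles, then establishing the limit via first and second moments and a Poisson CLT---is exactly the paper's. The single-particle computation is correct.

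The gap is in the pairwise step. The assertion that ``the post-$\tau$ evolutions of $u$ and $u'$ are measurable with respect to disjoint Poisson clock families, hence independent'' is not correct as stated: even after separation and after ruling out the direct collision edge $(e_{s,u},e_{s,u'})$, the instantaneous \emph{split} rate for each particle is $(n-K(s))\binom{n}{2}^{-1}$, and $K(s)$ is a global random quantity common to both particles. Thus $\alpha_{t(a),u}-\alpha_{\gamma,u}$ and $\alpha_{t(a),u'}-\alpha_{\gamma,u'}$ are time-changed Poisson counts driven by a \emph{shared} random clock, not independent Poissons. You correctly flag this in your ``Main obstacle'' paragraph; the paper's resolution is precisely the two-sided sandwich you allude to there. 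Since $n-2^{H_n}\le n-K(s)\le n$ deterministically, one couples the pair $(\alpha_{t(a),u}-\alpha_{\gamma,u},\,\alpha_{t(a),u'}-\alpha_{\gamma,u'})$ above by $(\overline\alpha,\overline\alpha')$ and below by $(\underline\alpha,\underline\alpha')$, where each pair consists of \emph{genuinely independent} Poissons with deterministic rates $n\binom{n}{2}^{-1}$ and $(n-2^{H_n})\binom{n}{2}^{-1}$, respectively, on $(\gamma,t(a)]$. Both rates integrate to $\log_2 n+a\sqrt{\log_2 n}+o(1)$, so the upper and lower CLT limits coincide and the joint probability converges to $\Phi(-a-\delta)^2$.

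One further point you leave implicit: it is not enough that separation occurs before $t(a)$; you need it to occur \emph{early}, so that the common pre-separation count $\alpha_{\gamma,u}=\alpha_{\gamma,u'}$ is $o(\sqrt{\log_2 n})$ and hence negligible on the CLT scale. Your observation that the pair remains co-located after $k$ splits with probability at most $2^{-k}$ does deliver this---the separation rate is at least $\tfrac12(n-2^{H_n})\binom{n}{2}^{-1}$, so $\gamma\le n(\log_2 n)^{1/3}$ with probability $1-o(1)$ and $\alpha_{\gamma,u}$ is stochastically dominated by a Poisson of mean $O((\log_2 n)^{1/3})$---but the paper makes this step explicit.
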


We set up some further notations before the proof.

For each $1 \leq u \leq 2^{H_n}$ and $t\in\bbr_+$, let $\alpha_{t,u}\in\{0, 1, \ldots, H_n+1\}$ be the number of times $0<t'<t$, where the pair $(e_{t',u}, j)$ is chosen to average, for some $j\neq e_{t',u}$, $1 \leq j \leq n$, and $w_{t',j}=0$;
and let $\beta_{t,u}\in\{0, 1\}$ be the number of times $t'<t$, where the pair $(e_{t',u}, j)$ is chosen to average, for some $j\neq e_{t',u}$, $1 \leq j \leq n$, and $w_{t',j}>0$.

From the above definitions, if $\beta_{t,u}=1$ or $\alpha_{t,u}=H_n+1$, we have $e_{t,u}=0$; otherwise we have $e_{t,u}>0$, and $\op_{t,u}=\alpha_{t,u}$.

\begin{lemma}  \label{decayofbeta}
As $n\rightarrow \infty$, we have $\pp[\beta_{t(a),u}=1]\rightarrow 0$ uniformly for each $u\in\bbz_+$.
\end{lemma}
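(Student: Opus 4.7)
The plan is to reduce the statement to a straightforward rate estimate, exploiting that $\beta_{t,u}$ takes values in $\{0,1\}$ so that $\pp[\beta_{t(a),u}=1]\le \ee[\beta_{t(a),u}]$. It then suffices to show the expected number of ``collision'' events before time $t(a)$ tends to zero uniformly in $u$.

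The key structural observation is that the particle model is \emph{conservative}: no particle is ever created, and the only transitions are splitting of a pile (particles move to another site) or death (particles sent to the graveyard site $0$). In particular, at every time $t'\ge 0$, the number of alive particles is at most $2^{H_n}$, so the number of sites $j\in\{1,\ldots,n\}$ with $w_{t',j}>0$ is also at most $2^{H_n}$. This bound is deterministic and holds along every trajectory.

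Next, I would set up the rate computation. Conditional on the state at time $t'$, $\beta_{\cdot,u}$ can only increase while $e_{t',u}=i\ne 0$, and it does so at instantaneous rate equal to $\#\{j\ne i:w_{t',j}>0\}\cdot \binom{n}{2}^{-1}$, since each pair carries an independent rate-$\binom{n}{2}^{-1}$ Poisson clock. By the structural observation, this rate is bounded above deterministically by
\[
\frac{2\cdot 2^{H_n}}{n(n-1)}.
\]
Integrating this uniform bound over $[0,t(a)]$ yields
\[
\ee[\beta_{t(a),u}]\le t(a)\cdot\frac{2\cdot 2^{H_n}}{n(n-1)}.
\]

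Finally, I would substitute $t(a)=n(\log_2 n+a\sqrt{\log_2 n})/2=O(n\log n)$ and $2^{H_n}\le n\cdot 2^{-(\log_2 n)^{1/3}}$ to obtain
\[
\ee[\beta_{t(a),u}]=O\!\left(\log n\cdot 2^{-(\log_2 n)^{1/3}}\right)\longrightarrow 0,
\]
where the bound is uniform in $u$. Markov's inequality then finishes the proof. There is no real obstacle here: the lemma is essentially a birthday-type calculation showing that while the surviving particle population is much smaller than $n$, two alive piles rarely meet under a single clock ring; the only thing one must be careful about is the particle-conservation bound $\#\{j:w_{t',j}>0\}\le 2^{H_n}$, which is exactly why the threshold $H_n$ was chosen below $\log_2 n$.
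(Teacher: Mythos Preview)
Your argument is correct and essentially identical to the paper's: both use the deterministic bound $|\{j:w_{t',j}>0\}|\le 2^{H_n}$ to cap the collision rate by $2^{H_n}\binom{n}{2}^{-1}$, then integrate over $[0,t(a)]$ to obtain $\pp[\beta_{t(a),u}=1]\le 2^{H_n}t(a)\binom{n}{2}^{-1}\to 0$. The only cosmetic difference is that the paper writes the bound as $1-\exp(-2^{H_n}t(a)\binom{n}{2}^{-1})$ before linearizing, whereas you bound the expectation directly; also note that since $\beta_{t(a),u}\in\{0,1\}$ you actually have $\pp[\beta_{t(a),u}=1]=\ee[\beta_{t(a),u}]$, so the Markov step is unnecessary.
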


\begin{proof}
At any time $0<t'<t(a)$ we have $|\{j:w_{t',j} > 0\}| \leq 2^{H_n}$, so
\[
\pp[\beta_{t(a),u}=1] \leq 1-\exp\left(-2^{H_n} t(a)\binom{n}{2}^{-1}\right) < 2^{H_n} t(a)\binom{n}{2}^{-1}.
\]
By $2^{H_n}=2^{\lfloor\log_2(n)- (\log_2(n))^{1/3}\rfloor}\leq n 2^{-(\log_2(n))^{1/3}}$, and the definition of $t(a)$,
we have that $\lim_{n\rightarrow \infty}2^{H_n} t(a)\binom{n}{2}^{-1}=0$.
\end{proof}

\begin{proof}[Proof of Proposition \ref{weightedestimate}]
Using the particle model, and the definitions above, we have
\begin{align*}
&\sum_{i=1}^n w_{t(a),i}\1[p_{t(a),i} \leq \log_2(n) - \delta\sqrt{\log_2(n)}]\\
\quad=&
2^{-H_n}\sum_{u=1}^{2^{H_n}} \1[\op_{t(a),u} \leq \log_2(n) - \delta\sqrt{\log_2(n)}]\\
\quad=&
2^{-H_n}\sum_{u=1}^{2^{H_n}} \1[\beta_{t(a),u}=0]\1[\alpha_{t(a),u} \leq (\log_2(n) - \delta\sqrt{\log_2(n)})\wedge H_n].
\end{align*}
Here and below $\wedge$ denotes taking the minimum of two numbers.
By Lemma \ref{decayofbeta}, it suffices to show that 
\begin{equation}   \label{eq:alphaconverge}
2^{-H_n}\sum_{u=1}^{2^{H_n}} \1[\alpha_{t(a),u} \leq (\log_2(n) - \delta\sqrt{\log_2(n)})\wedge H_n] \cvp \Phi(-a-\delta).
\end{equation}
We will show that the expectation converges to the desired values, and the variance decays to zero.

We consider the distribution of $\alpha_{t(a),u}$ and $\alpha_{t(a),u'}$, for some $1\leq u < u' \leq 2^{H_n}$.
At any time $0 < t'<t(a)$, the number of empty sites satisfies $n-2^{H_n} \leq |\{j:w_{t',j} = 0\}| \leq n$.
This means that conditioned on $\beta_{t(a),u}=\beta_{t(a),u'}=0$, 
at any time $0 \leq t' \leq t(a)$, $\alpha_{t',u}$ and $\alpha_{t',u'}$ grow with rate between
$(n-2^{H_n})\binom{n}{2}^{-1}$ and $n\binom{n}{2}^{-1}$,
unless they equal $H_n+1$.

To study the joint law of $\alpha_{t(a),u}$ and $\alpha_{t(a),u'}$, we let
\[\gamma := \inf\{t \in \bbr_+: e_{t,u}\neq e_{t,u'}\}\cup \{t(a)\}.\]
Conditioned on $\beta_{t(a),u}=\beta_{t(a),u'}=0$, for any $\gamma < t' < t(a)$, we must have that $e_{t',u}\neq e_{t',u'}$, unless $\alpha_{t',u}=\alpha_{t',u'}=H_n+1$ and $e_{t',u}=e_{t',u'} =0$.
Thus conditioned on $\beta_{t(a),u}=\beta_{t(a),u'}=0$, we can couple the joint distribution of
$\alpha_{t(a),u}-\alpha_{\gamma,u}$, $\alpha_{t(a),u'}-\alpha_{\gamma,u'}$ with some $\underline{\alpha}$, $\underline{\alpha}'$,
such that
\begin{equation}   \label{eq:boundalphalower}
\underline{\alpha}\leq \alpha_{t(a),u}-\alpha_{\gamma,u} \;\text{and}\;
\underline{\alpha}'\leq \alpha_{t(a),u'}-\alpha_{\gamma,u'},
\end{equation}
where $\underline{\alpha}$ and $\underline{\alpha}'$ are independent $\Poiss\left((n-2^{H_n}) (t(a)-\gamma)\binom{n}{2}^{-1}\right)$ upper truncated by $H_n+1$.
We can also couple the joint distribution of
$\alpha_{t(a),u}-\alpha_{\gamma,u}$, $\alpha_{t(a),u'}-\alpha_{\gamma,u'}$ with some $\overline{\alpha}$, $\overline{\alpha}'$,
such that
\begin{equation}   \label{eq:boundalphaupper}
\overline{\alpha}\geq \alpha_{t(a),u}-\alpha_{\gamma,u} \;\text{and}\;
\overline{\alpha}'\geq \alpha_{t(a),u'}-\alpha_{\gamma,u'},
\end{equation}
where $\overline{\alpha}$ and $\overline{\alpha}'$ are independent $\Poiss\left(n (t(a)-\gamma)\binom{n}{2}^{-1}\right)$ upper truncated by $H_n+1$.

Next we control $\gamma$.
We show that as $n\rightarrow \infty$, $\pp[\gamma > n(\log_2(n))^{1/3}]\rightarrow 0$ uniformly in $u, u'$.
Indeed, at any time $t'$, if $e_{t',u}=e_{t',u'}$ is chosen to average with some $j$ such that $w_{t',j}=0$, then with probability $\geq \frac{1}{2}$, $e_{u}$ is going to be different from $e_{u'}$.
Thus $e_{u}$ and $e_{u'}$ become different at rate lower bounded by $\frac{1}{2}(n-2^{H_n})\binom{n}{2}^{-1}$, and this implies that $\pp[\gamma > n(\log_2(n))^{1/3}] \leq \exp\left(-\frac{1}{2}(n-2^{H_n})\binom{n}{2}^{-1}n(\log_2(n))^{1/3}\right)$, which decays to zero as $n\rightarrow \infty$.

At the same time, given $\gamma$, the law of $\alpha_{\gamma,u}=\alpha_{\gamma,u'}$ is stochastically dominated by $\Poiss\left(n\gamma \binom{n}{2}^{-1}\right)+1$.
Thus we have that 
\begin{equation}  \label{eq:decayoftc}
\frac{\alpha_{\gamma,u}}{\sqrt{\log_2(n)}} \cvp 0,
\end{equation}
uniformly in $u, u'$.

From the law of $\underline{\alpha}, \underline{\alpha}'$, and the estimate on $\gamma$, 
we have $\frac{\underline{\alpha}-\log_2(n)-a\sqrt{\log_2(n)}}{\sqrt{\log_2(n)}}$,
$\frac{\underline{\alpha}'-\log_2(n)-a\sqrt{\log_2(n)}}{\sqrt{\log_2(n)}}$ jointly converge in law to two independent standard Gaussian random variables, each upper truncated by $-a$;
and the same is true for  $\frac{\overline{\alpha}-\log_2(n)-a\sqrt{\log_2(n)}}{\sqrt{\log_2(n)}}$ 
and
$\frac{\overline{\alpha}'-\log_2(n)-a\sqrt{\log_2(n)}}{\sqrt{\log_2(n)}}$.
By \eqref{eq:boundalphalower}, \eqref{eq:boundalphaupper}, and \eqref{eq:decayoftc}, we have $\frac{\alpha_{t(a),u}-\log_2(n)-a\sqrt{\log_2(n)}}{\sqrt{\log_2(n)}}$ 
and
$\frac{\alpha_{t(a),u'}-\log_2(n)-a\sqrt{\log_2(n)}}{\sqrt{\log_2(n)}}$ also jointly converge in law to two independent standard Gaussian random variables, each upper truncated by $-a$.
Thus as $n\rightarrow \infty$, for $\delta > 0$, uniformly in $u, u'$ we have
\[
\pp[\alpha_{t(a),u} \leq \log_2(n)-\delta\sqrt{\log_2(n)}] \rightarrow \Phi(-a-\delta),
\]
\[
\pp[\alpha_{t(a),u'} \leq \log_2(n)-\delta\sqrt{\log_2(n)}] \rightarrow \Phi(-a-\delta),
\]
\[
\pp[\alpha_{t(a),u}, \alpha_{t(a),u'} \leq \log_2(n)-\delta\sqrt{\log_2(n)}] \rightarrow \Phi(-a-\delta)^2.
\]
For the case where $\delta=0$, we also have that
\[
\pp[\underline{\alpha}=H_n+1], \pp[\underline{\alpha}'=H_n+1], \pp[\overline{\alpha}=H_n+1], \pp[\overline{\alpha}'=H_n+1] \rightarrow 1-\Phi(-a),
\]
\[
\pp[\underline{\alpha}=\underline{\alpha}'=H_n+1], \pp[\overline{\alpha}=\overline{\alpha}'=H_n+1] \rightarrow (1-\Phi(-a))^2,
\]
so
\[
\pp[\alpha_{t(a),u}=H_n+1], \pp[\alpha_{t(a),u'}=H_n+1] \rightarrow 1-\Phi(-a),
\]
\[
\pp[\alpha_{t(a),u}=\alpha_{t(a),u'}=H_n+1] \rightarrow (1-\Phi(-a))^2.
\]
Thus for either $\delta > 0$ or $\delta = 0$, we have
\[
\pp[\alpha_{t(a),u} \leq (\log_2(n)-\delta\sqrt{\log_2(n)})\wedge H_n] \rightarrow \Phi(-a-\delta),
\]
\[
\pp[\alpha_{t(a),u}, \alpha_{t(a),u'} \leq (\log_2(n)-\delta\sqrt{\log_2(n)})\wedge H_n] \rightarrow \Phi(-a-\delta)^2,
\]
uniformly in $u, u'$.
By summing over $u$ and $u, u'$, we have
\[
\ee \left[2^{-H_n}\sum_{u=1}^{2^{H_n}} \1[\alpha_{t(a),u} \leq (\log_2(n)-\delta\sqrt{\log_2(n)})\wedge H_n]\right] \rightarrow \Phi(-a-\delta),
\]
\[
\ee \left[\left(2^{-H_n}\sum_{u=1}^{2^{H_n}} \1[\alpha_{t(a),u} \leq (\log_2(n)-\delta\sqrt{\log_2(n)})\wedge H_n]\right)^2\right] \rightarrow \Phi(-a-\delta)^2.
\]
These imply \eqref{eq:alphaconverge}, so our conclusion follows.
\end{proof}

From this we immediately get one side of Theorem \ref{continuouscutoffthm}. 

\begin{cor}   \label{easysidecutoff}
For any $\eta > 0$, we have
\[
\lim_{n\rightarrow \infty}\pp[T'(t(a)) > 2\Phi(-a)-\eta]=1.
\]
\end{cor}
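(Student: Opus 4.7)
The plan is to turn the weighted estimate in Proposition \ref{weightedestimate} into a lower bound on $T'(t(a))$ via the coupling inequality $w_{t,i} \leq x'_{t,i} + 1/n$, then let an auxiliary parameter $\delta > 0$ shrink to zero. Since the averaging process preserves the mean and we have normalized $\bar x_0 = 0$, we have $\sum_i x'_{t,i} = 0$ for all $t$, hence
\[
T'(t) \;=\; 2\sum_{i:\, x'_{t,i} > 0} x'_{t,i},
\]
which is the quantity I will bound from below.

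Fix $\delta > 0$ and set $I_\delta := \{i : p_{t(a),i} \leq \log_2(n) - \delta\sqrt{\log_2(n)}\}$. On $I_\delta$ one has $w_{t(a),i} \geq 2^{\delta\sqrt{\log_2(n)}}/n$, which dominates $1/n$ by a diverging factor; the coupling therefore forces $x'_{t(a),i} > 0$ on $I_\delta$ and gives
\[
T'(t(a)) \;\geq\; 2\sum_{i \in I_\delta} x'_{t(a),i} \;\geq\; 2\sum_{i \in I_\delta} w_{t(a),i} \;-\; \frac{2|I_\delta|}{n}.
\]
The error term is negligible: every nonzero entry of $w_{t(a)}$ has value at least $2^{-H_n}$ and the total mass is at most $1$, so $|I_\delta| \leq 2^{H_n}$, and by the choice of $H_n$ we get $2^{H_n}/n \leq 2^{-(\log_2 n)^{1/3}} \to 0$.

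Proposition \ref{weightedestimate} then gives $\sum_{i \in I_\delta} w_{t(a),i} \cvp \Phi(-a-\delta)$, so combining the above,
\[
T'(t(a)) \;\geq\; 2\Phi(-a-\delta) - o_{\pp}(1).
\]
Given $\eta > 0$, I would pick $\delta > 0$ small enough that $2\Phi(-a-\delta) \geq 2\Phi(-a) - \eta/2$ by continuity of $\Phi$; the corollary follows at once.

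The only point requiring care is balancing the $1/n$ slack in the coupling against the scale of $w$-entries near the cutoff threshold: if one applied Proposition \ref{weightedestimate} directly with $\delta = 0$, coordinates of weight only $\Theta(1/n)$ would be swamped by the slack and positivity of $x'_{t(a),i}$ could fail. Taking $\delta > 0$ strict in the weighted estimate produces the necessary separation $w_{t(a),i} \gg 1/n$, and continuity of $\Phi$ at $-a$ then recovers the sharp bound in the limit.
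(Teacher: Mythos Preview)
Your proof is correct and follows the same route as the paper: use the coupling $x'_{t,i}+1/n\ge w_{t,i}$ to turn the weighted estimate of Proposition~\ref{weightedestimate} into a lower bound on $T'(t(a))=2\sum_i (x'_{t(a),i})^+$, and kill the $1/n$ slack using $|\{i:w_{t(a),i}\neq 0\}|\le 2^{H_n}=o(n)$.

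One remark: the detour through $\delta>0$ followed by continuity of $\Phi$ is unnecessary, and your stated concern in the last paragraph is misplaced. You already invoke the fact that every nonzero $w_{t(a),i}$ satisfies $w_{t(a),i}\ge 2^{-H_n}$ when bounding $|I_\delta|$; but $2^{-H_n}\ge n^{-1}2^{(\log_2 n)^{1/3}}>1/n$, so \emph{every} nonzero $w_{t(a),i}$ already forces $x'_{t(a),i}>0$. Hence Proposition~\ref{weightedestimate} can be applied directly with $\delta=0$ (noting that $p_{t(a),i}\le H_n<\log_2 n$ whenever $w_{t(a),i}\neq 0$, so the indicator is vacuous), yielding $\sum_i w_{t(a),i}\cvp \Phi(-a)$ and the corollary in one step. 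This is exactly what the paper does.
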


\begin{proof}
From the construction of $w_t$, for any $1\leq i \leq n$ we have $x_{t(a),i}'+\frac{1}{n}\geq w_{t(a),i}$, and we have either $w_{t(a),i} = 0$, or $w_{t(a),i} \geq 2^{-H_n} > \frac{1}{n}$.
Thus we have
\begin{align*}
T'(t(a)) &=2\sum_{i=1}^n (x_{t(a),i}')^+\geq 2\sum_{i=1}^n w_{t(a), i}
 - \frac{2}{n}|\{1\leq i \leq n: w_{t(a),i} \neq 0\}|.
\end{align*}

Since $0 \leq |\{1\leq i \leq n: w_{t(a),i} \neq 0\}| \leq 2^{H_n}$, the second term converges to zero as $n\rightarrow \infty$.
For the first term, by Proposition \ref{weightedestimate} it converges to $\Phi(-a)$ in probability which completes the corollary.
\end{proof}

For the other side of Theorem \ref{continuouscutoffthm},
we need to bound $\sum_{i:w_{t(a),i}=0}|x_{t(a),i}|$.
To do this, we split the process $x_t$ into ones with fast-decaying $L_1$ norm.

\begin{defn}
Given $\delta>0$ and $m\in \bbz_+$.
Define a sequence of times $t[1], \ldots, t[m]$ as $t[k]:=t(a-\frac{\delta(m-k+4)}{2})$.
For each $1\leq u \leq 2^{H_n}$, we define
\[
k_u:=\min \{ 1 \leq k\leq m: \log_2(n)-\delta\sqrt{\log_2(n)} < \op_{t[k], u} < \infty\}\cup\{\infty\}.
\]

For each $1\leq k\leq m$, we define a repeated averaging process $\{x_t^{(k)}\}_{t\geq t[k]}$ as following.
We let
\[
x_{t[k], i}^{(k)}:=2^{-H_n}|\{1\leq u \leq 2^{H_n}: k_u=k,  e_{t[k],u}=i\}|,
\]
and since times $t[k]$, we let $x_t^{(k)}$ evolve using the same averaging pairs as $x_t'$.
We also denote $\bar{x}^{(k)}:=\frac{1}{n}\sum_{i=1}^n x_{t[k],i}^{(k)}$.
\end{defn}

From these definitions we have the following results.
\begin{lemma}  \label{boundpart}
We have
$x_{t[k], i}^{(k)} < \frac{2^{\delta\sqrt{\log_2(n)}}}{n}$.
\end{lemma}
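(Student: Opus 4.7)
The plan is to unwind the definitions: I will first bound $x^{(k)}_{t[k], i}$ by $w_{t[k], i}$ via the particle-model coupling, and then exploit the defining property of $k_u$ to show that $w_{t[k], i}$ is small whenever $x^{(k)}_{t[k], i} > 0$.

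First, from the definition
\[
x_{t[k], i}^{(k)} = 2^{-H_n}\bigl|\{1\leq u \leq 2^{H_n}: k_u=k,\; e_{t[k],u}=i\}\bigr|
\]
and the coupling identity $|P_{t,i}| = 2^{H_n} w_{t,i}$, I would observe
\[
x_{t[k], i}^{(k)} \;\leq\; 2^{-H_n}\bigl|\{1\leq u \leq 2^{H_n}:\, e_{t[k],u}=i\}\bigr| \;=\; 2^{-H_n}|P_{t[k], i}| \;=\; w_{t[k], i}.
\]
So it suffices to prove $w_{t[k], i} < 2^{\delta\sqrt{\log_2(n)}}/n$ whenever $x^{(k)}_{t[k], i} > 0$ (the case $x^{(k)}_{t[k], i} = 0$ being trivial since the right-hand side is positive).

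Assume $x^{(k)}_{t[k], i} > 0$. Then there is some particle $u$ with $k_u = k$ and $e_{t[k], u} = i$. By the definition of $k_u$, this forces $\log_2(n) - \delta\sqrt{\log_2(n)} < \op_{t[k], u} < \infty$. Since $e_{t[k], u} = i \neq 0$, the convention gives $\op_{t[k], u} = p_{t[k], i}$, so
\[
p_{t[k], i} > \log_2(n) - \delta\sqrt{\log_2(n)},
\]
and consequently
\[
w_{t[k], i} \;=\; 2^{-p_{t[k], i}} \;<\; 2^{-(\log_2(n) - \delta\sqrt{\log_2(n)})} \;=\; \frac{2^{\delta\sqrt{\log_2(n)}}}{n}.
\]
Combined with the first inequality this proves the lemma.

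I do not anticipate any obstacle: the statement is a direct bookkeeping consequence of the particle-model coupling together with the definition of $k_u$, and no probabilistic estimate is required.
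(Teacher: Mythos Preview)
Your proof is correct and follows essentially the same route as the paper's. The only cosmetic difference is the organization of the case split: the paper splits on whether $p_{t[k],i}\le \log_2(n)-\delta\sqrt{\log_2(n)}$ (and shows $x^{(k)}_{t[k],i}=0$ in that case), whereas you split on whether $x^{(k)}_{t[k],i}>0$ and then deduce the inequality on $p_{t[k],i}$ --- these are contrapositives of one another.
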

\begin{proof}
From the definition above, if $p_{t[k], i} \leq \log_2(n)-\delta\sqrt{\log_2(n)}$,
then for any $u$ with $e_{t[k], u} = i$ we must have $k_u \neq k$, so we have $x_{t[k], i}^{(k)} = 0$.
Otherwise, since we have $x_{t[k], i}^{(k)}\leq 2^{-H_n}|P_{t[k], i}| = w_{t[k],i} = 2^{-p_{t[k],i}}$, our conclusion follows as well.
\end{proof}

\begin{lemma}
Almost surely we have
\begin{equation}   \label{eq:partboundstrong}
x_{t,i}'+\frac{1}{n} \geq \sum_{k:t[k]\leq t} x_{t,i}^{(k)}
+ 2^{-H_n} |\{1\leq u \leq 2^{H_n}: t[k_u]>t, e_{t,u}=i \}|,
\end{equation}
where we assume that $t[\infty]=\infty$.
\end{lemma}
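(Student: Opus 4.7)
The plan is to prove \eqref{eq:partboundstrong} by induction on the ordered events---the Poisson averaging times together with the deterministic cutoff times $t[1]<\cdots<t[m]$---noting that both sides are piecewise constant between consecutive events. Writing $y_{t,i}:=x'_{t,i}+1/n$ and $N_{t,i}:=|\{u:t[k_u]>t,\,e_{t,u}=i\}|$, I will show that the slack $D_{t,i}:=y_{t,i}-\sum_{k:t[k]\le t}x^{(k)}_{t,i}-2^{-H_n}N_{t,i}$ remains $\ge 0$ at every time $t$.

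The key structural ingredient I will need is a \emph{site-homogeneity} invariant: at every time $t$ and every site $i$ with $|P_{t,i}|>0$, all particles currently at $i$ share the same ``captured-by-time-$t$'' status, i.e., either all have $t[k_u]\le t$ (with a common $k_u$) or all still have $t[k_u]>t$. I will establish this by a parallel induction. At $t=0$ all particles sit at site $1$ and are uncaptured. At a cutoff $t[k]$, the capture condition $\op_{t[k],u}>\log_2(n)-\delta\sqrt{\log_2(n)}$ depends only on the common pile index $p_{t[k],i}$ of the particle's site, so either all previously uncaptured particles at $i$ receive $k_u=k$ or none do. At a Poisson event the discard cases empty the affected sites (trivially homogeneous), while a split $\{|P_{\tau-,i}|,|P_{\tau-,j}|\}=\{2L,0\}$ takes a homogeneous pile at $i$ and distributes it into two homogeneous halves at $i$ and $j$ without mixing statuses.

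Given this invariant, the main induction will be routine. At $t=0$ both sides equal $\mathbf{1}_{i=1}$, so $D_{0,i}=0$. At each cutoff $t[k]$ the newborn contribution $x^{(k)}_{t[k],i}=2^{-H_n}c_k(i)$, where $c_k(i):=|\{u:k_u=k,\,e_{t[k],u}=i\}|$, exactly compensates the simultaneous drop of $c_k(i)$ in $N_{t,i}$, so $D_{t,i}$ is preserved. At a Poisson time $\tau$ on edge $(i,j)$, both $y$ and each $x^{(k)}$ average on $\{i,j\}$. In the discard cases (both piles positive, or one equal to $1$, or both empty), we have $N_{\tau,i}=N_{\tau,j}=0$, and a direct computation using the inductive hypothesis gives
\[
D_{\tau,i}=D_{\tau,j}=\tfrac{1}{2}(D_{\tau-,i}+D_{\tau-,j})+2^{-H_n-1}(N_{\tau-,i}+N_{\tau-,j})\ge 0.
\]
In the split case $\{|P_{\tau-,i}|,|P_{\tau-,j}|\}=\{2L,0\}$, site-homogeneity at $i$ forces either $N_{\tau-,i}=2L$ (all remaining, so the uniformly random half-split produces the balanced outcome $N_{\tau,i}=N_{\tau,j}=L$) or $N_{\tau-,i}=0$ (all captured, so $N_{\tau,i}=N_{\tau,j}=0$); in either subcase the hypergeometric split is effectively deterministic for the $N$-count and $D_{\tau,i}=D_{\tau,j}=\tfrac{1}{2}(D_{\tau-,i}+D_{\tau-,j})\ge 0$.

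The step that will require the most care is establishing the site-homogeneity invariant. Without it, an asymmetric split in a mixed pile (captured and remaining particles coexisting at $i$) would produce $|D_{\tau,i}-D_{\tau,j}|=2^{-H_n}|N_{\tau-,i}-2N_{\tau,i}|$ that the conserved quantity $D_{\tau-,i}+D_{\tau-,j}$ might not dominate pointwise; homogeneity is precisely the structural fact that rules such mixed piles out, and it is a consequence of the particle model never merging disjoint piles together with the site-only dependence of the capture threshold.
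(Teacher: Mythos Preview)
Your proof is correct and follows the same inductive scheme as the paper: both argue that the right-hand side $Z_{t,i}$ is unchanged at each cutoff time $t[k]$ and satisfies $Z_{t,i}\le\tfrac12(Z_{t-,i}+Z_{t-,j})$ at each Poisson ring, so that the inequality propagates from the base case. The paper asserts this sub-averaging ``in any cases'' in a single line, whereas you supply the site-homogeneity invariant to justify the split case rigorously---without it, the random half-split of a mixed pile could in principle send more than half of the uncaptured particles to one side and violate the pointwise inequality for $N_{t,i}$. So your argument is not a different route but a more careful execution of the same one, making explicit a structural fact (all particles at a nonempty site share a common capture status) that the paper leaves implicit.
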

\begin{proof}
Denote the right hand side by $Z_{t,i}$.  First note that at times $t[k]$ the right hand side does not change since almost surely there are no clock rings and the extra term $x_{t,i}^{(k)}$ in the sum is exactly compensated for by the decrease in the second term from particles with $k_u=k$.

We establish the lemma by considering the evolution of the averaging process.    If the clock rings for $(i,j)$ at time $t$ the pair $(x_{t-,i}'+\frac{1}{n},x_{t-,j}'+\frac{1}{n})$ is replaced with $\frac{x_{t-,i}'+x_{t-,j}'}{2}+\frac{1}{n}$ at $i$ and $j$.  The right hand side also undergoes an averaging but sometimes when particles collide or are singletons.
In any cases we have that
\[
Z_{t,i} \leq \frac{Z_{t-,i}+Z_{t-,j}}{2}\leq \frac{x_{t-,i}'+x_{t-,j}'}{2}+\frac{1}{n} = x_{t,i}'+\frac{1}{n}
\]
Thus by induction we have that \eqref{eq:partboundstrong} holds.
\end{proof}

\begin{lemma}  \label{lowerboundpart}
Given any $\eta \in \bbr_+$ and $a \in \bbr$, we can take $\delta$ small enough, then $m$ large enough, so that
$\pp\left[\sum_{k=1}^m\sum_{i=1}^n x_{t[k],i}^{(k)} < 1 - \Phi(-a) - \eta\right] \rightarrow 0$ as $n\rightarrow \infty$.
\end{lemma}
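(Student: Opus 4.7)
The plan is to rewrite the sum as a normalized particle count and apply the second-moment method.  Since $k_u=k$ forces $e_{t[k],u}\in\{1,\ldots,n\}$ (the particle being alive at $t[k]$), summing the definition of $x_{t[k],i}^{(k)}$ over $i$ gives $\sum_{i=1}^n x_{t[k],i}^{(k)}=2^{-H_n}|\{u:k_u=k\}|$, so the quantity of interest equals
\[
N_n:=2^{-H_n}\bigl|\{u:1\leq k_u\leq m\}\bigr|.
\]
Call a particle $u$ \emph{captured} if $k_u\leq m$; by Lemma \ref{decayofbeta}, the contribution from particles with $\beta_{t(a),u}=1$ is negligible, so throughout the analysis of a single particle we may condition on $\beta_{t(a),u}=0$ at a cost $o(1)$.

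The core step is to show, for each fixed $u$, that up to an event of probability $o(1)$ as $n\to\infty$ (with $\delta,m$ fixed), being captured is equivalent to $\alpha_{t[m],u}>\log_2(n)-\delta\sqrt{\log_2(n)}$.  The forward direction is just monotonicity of $\alpha$.  For the converse, suppose $\alpha_{t[m],u}>\log_2(n)-\delta\sqrt{\log_2(n)}$; if the particle is alive at $t[m]$ we are done with $k=m$, so assume it died via $\alpha$ reaching $H_n+1$ at some $\sigma\in(t[k^*],t[k^*+1]]$, where $k^*$ is the largest index at which the particle is alive.  Over this window the number of $\alpha$-increments is Poisson with mean $\delta\sqrt{\log_2(n)}/2$; a Chernoff estimate shows this number is at most $\delta\sqrt{\log_2(n)}-(\log_2(n))^{1/3}$ with failure probability $e^{-c\sqrt{\log_2(n)}}$ (valid for $n$ large depending on $\delta$), so $\alpha_{t[k^*],u}\geq H_n+1-(\delta\sqrt{\log_2(n)}-(\log_2(n))^{1/3})=\log_2(n)+1-\delta\sqrt{\log_2(n)}$, yielding capture at $k=k^*$.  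A union bound over $k^*\leq m$ absorbs this into an $o(1)$ term.  The remaining degenerate case $k^*=0$ requires $\alpha_{t[1],u}=H_n+1$, whose probability tends to $\Phi(a-\delta(m+3)/2)$ by the CLT analysis inside Proposition \ref{weightedestimate}, and this can be made $<\eta/8$ by enlarging $m$.

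Given this equivalence, the same CLT analysis as in Proposition \ref{weightedestimate} yields $\ee[N_n]=\pp[k_1\leq m]\to 1-\Phi(\delta-a)$ (using $a_m=a-2\delta$).  The pairwise coupling from Proposition \ref{weightedestimate}---invoking the bound $\pp[\gamma>n(\log_2(n))^{1/3}]\to 0$ together with \eqref{eq:decayoftc}---extends to give $\pp[k_u\leq m,\,k_{u'}\leq m]\to(1-\Phi(\delta-a))^2$ uniformly over $u\neq u'$, hence $\ee[N_n^2]\to(1-\Phi(\delta-a))^2$ and $\var[N_n]\to 0$.  Chebyshev's inequality then gives $N_n\cvp 1-\Phi(\delta-a)$; choosing $\delta>0$ small enough that $1-\Phi(\delta-a)>1-\Phi(-a)-\eta/2$ (possible by continuity of $\Phi$) completes the proof.

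The main obstacle is the equivalence in the second paragraph: showing that an $\alpha$-death does not silently occur between two consecutive sample times $t[k^*],t[k^*+1]$ without yielding a capture at $t[k^*]$.  This is what dictates the specific choice $H_n=\lfloor\log_2(n)-(\log_2(n))^{1/3}\rfloor$, since the gap $(\log_2(n))^{1/3}$ between $H_n$ and $\log_2(n)$ must be negligible compared to both the window width $\delta\sqrt{\log_2(n)}$ in $\alpha$-space and the typical per-interval jump $\delta\sqrt{\log_2(n)}/2$, for every fixed $\delta>0$ as $n\to\infty$.
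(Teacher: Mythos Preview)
Your approach is essentially the paper's. Both rewrite the sum as $N_n=2^{-H_n}|\{u:k_u\le m\}|$ and decompose the failure event $\{k_u=\infty\}$ into exactly the same three pieces: dead by $t[1]$ (your $k^*=0$, the paper's case $\op_{t[1],u}=\infty$), alive at $t[m]$ with $\alpha$ still below threshold (the complement of your event $\alpha_{t[m],u}>\text{threshold}$, the paper's case $\op_{t[m],u}\le\text{threshold}$), and a large $\alpha$-jump over some window $(t[k],t[k+1]]$ (your Chernoff failure, the paper's third case). The Poisson domination and CLT inputs are identical.

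One point needs cleaning up. You assert that captured $\Leftrightarrow\{\alpha_{t[m],u}>\text{threshold}\}$ ``up to an event of probability $o(1)$ with $\delta,m$ fixed'', but you then correctly compute that the $k^*=0$ case has limiting probability $\Phi(a-\delta(m+3)/2)$, which is \emph{not} $o(1)$ for fixed $m$. This inconsistency propagates: the claims $\ee[N_n]\to 1-\Phi(\delta-a)$ and $\Var[N_n]\to 0$ are not literally true for fixed $m$, so Chebyshev does not directly give $N_n\cvp 1-\Phi(\delta-a)$. The easy fix---and what the paper does---is not to seek an exact limit for $N_n$ itself but to invoke Proposition~\ref{weightedestimate} separately on the indicator sums for the first two pieces (these do concentrate, at $1-\Phi\bigl(-a+\tfrac{\delta(m+3)}{2}\bigr)$ and $\Phi(-a+2\delta)$ respectively), and handle the Chernoff-failure piece by Markov since its expectation is $o(1)$. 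This yields the one-sided bound
\[
N_n \ \ge\ \Phi\!\left(-a+\tfrac{\delta(m+3)}{2}\right)-\Phi(-a+2\delta)-o_P(1),
\]
which exceeds $1-\Phi(-a)-\eta$ once $\delta$ is small and $m$ is large, exactly as you intended.
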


\begin{proof}
From the definition we have
\[
\sum_{k=1}^m\sum_{i=1}^n x_{t[k],i}^{(k)}
=
2^{-H_n}|\{1 \leq u \leq 2^{H_n}: k_u \neq \infty\}|.
\]
For any $1\leq u \leq 2^{H_n}$, $\op_{t,u}$ is non-decreasing in $t$.
Thus if $k_u = \infty$, we must have one of the following cases
\begin{itemize}
    \item Either $\op_{t[1],u}=\infty$, i.e., $e_{t[1],u}=0$;
    \item Or $\op_{t[m],u} \leq \log_2(n) - \delta\sqrt{\log_2(n)}$;
    \item Or for some $1\leq k \leq m-1$ we have that $\op_{t[k],u} \leq \log_2(n) - \delta\sqrt{\log_2(n)}$ and $\op_{t[k+1],u}=\infty$, i.e., $e_{t[k+1],u}=0$.
\end{itemize}

By Proposition \ref{weightedestimate}, 
\begin{equation} \label{eq:initterm}
   2^{-H_n}\sum_{u=1}^{2^{H_n}}\1[e_{t[1],u}=0] \cvp 1-\Phi\left(-a+\frac{\delta (m+3)}{2}\right),
\end{equation}
and
\begin{equation} \label{eq:remainterm}
2^{-H_n}\sum_{u=1}^{2^{H_n}}\1[\op_{t[m],u} \leq \log_2(n) - \delta\sqrt{\log_2(n)}] \cvp \Phi(-a+2\delta).
\end{equation}

Now we take $1\leq k \leq m-1$ and estimate
\begin{equation}  \label{eq:fastjumpterm}
2^{-H_n}\sum_{u=1}^{2^{H_n}}\1[\op_{t[k],u} \leq \log_2(n) - \delta\sqrt{\log_2(n)}, e_{t[k+1],u}=0].
\end{equation}
For each $u$ we have
\begin{align*}
& \1[\op_{t[k],u} \leq \log_2(n) - \delta\sqrt{\log_2(n)}, e_{t[k+1],u}=0 ]
\\
\leq &
\1[\beta_{t[k+1],u}=1]
\\
&+
\1[\beta_{t[k+1],u}=0, \alpha_{t[k],u}\leq \log_2(n) - \delta\sqrt{\log_2(n)}, \alpha_{t[k+1],u}=H_n+1].
\end{align*}
By Lemma \ref{decayofbeta} we have $\1[\beta_{t[k+1],u}=1]\rightarrow 0$ uniformly in $u$.
Also, conditioned on $\beta_{t[k+1],u}=0$, we have that  $\alpha_{t[k+1],u}-\alpha_{t[k],u}$ is dominated by $\Poiss\left(n(t[k+1]-t[k])\binom{n}{2}^{-1}\right)=\Poiss\left(\frac{n^2\sqrt{\log_2(n)}\delta}{4}\binom{n}{2}^{-1}\right)$.
This implies that
$\pp[\alpha_{t[k+1],u}-\alpha_{t[k],u}\geq \delta\sqrt{\log_2(n)} - (\log_2(n))^{1/3}]\rightarrow 0$ as $n\rightarrow \infty$, uniformly in $u$.
Thus \eqref{eq:fastjumpterm} decays to zero in probability as $n\rightarrow \infty$.

Summing over $1\leq k\leq m-1$, and using \eqref{eq:initterm} and \eqref{eq:remainterm}, for any $\epsilon > 0$ we have
\[
\pp\left[\sum_{k=1}^m\sum_{i=1}^n x_{t[k],i}^{(k)} < \Phi\left(-a+\frac{\delta (m+3)}{2}\right) - \Phi(-a+2\delta) - \epsilon \right] \rightarrow 0.
\]
By choosing $\epsilon < \eta$, taking $\delta$ small, then $m$ large, we have $\Phi\left(-a+\frac{\delta (m+3)}{2}\right) - \Phi(-a+2\delta) - \epsilon > 1 - \Phi(-a) - \eta$, and our conclusion follows.
\end{proof}

\begin{proof}[Proof of Theorem \ref{continuouscutoffthm}]
According to Corollary \ref{easysidecutoff}, it suffices to show that for any $\eta > 0$, we have $\lim_{n\rightarrow \infty}\pp[T'(t(a)) > 2\Phi(-a)+\eta]=0$.

We have
\begin{align*}
T'(t(a)) &= \sum_{i=1}^n |x_{t(a), i}'|
\\
&\leq \sum_{i=1}^n \left|x_{t(a), i}' + \sum_{k=1}^m \left(\bar{x}^{(k)} - x_{t(a),i}^{(k)}\right)\right| + \sum_{k=1}^m \left|x_{t(a),i}^{(k)} - \bar{x}^{(k)}\right|.
\end{align*}
For each $1\leq i \leq n$, we denote $r_i:=x_{t(a), i}' + \frac{1}{n} - \sum_{k=1}^m x_{t(a),i}^{(k)}$.
By \eqref{eq:partboundstrong} we have $r_i\ge 0$, and \[\bar{r}: = \frac{1}{n}\sum_{i=1}^n r_i = \frac{1}{n}\sum_{i=1}^n \left(x_{t(a), i}' + \frac{1}{n} - \sum_{k=1}^m x_{t(a),i}^{(k)}\right) = \frac{1}{n} - \sum_{k=1}^m \bar{x}^{(k)}.\]
Then we have
\begin{align*}
\sum_{i=1}^n \left|x_{t(a), i}' + \sum_{k=1}^m \left(\bar{x}^{(k)} - x_{t(a),i}^{(k)}\right)\right|
& =
\sum_{i=1}^n |r_i - \bar{r}| 
\\
& \leq 2 \sum_{i=1}^n r_i = 2 - 2\sum_{k=1}^m\sum_{i=1}^n x_{t(a),i}^{(k)}.
\end{align*}
By Lemma \ref{lowerboundpart}, if we take $\delta$ small enough then $m$ large enough, the right hand side is asymptotically bounded by $2\Phi(-a) + \frac{\eta}{2}$.

For any $1\leq k \leq m$, by the continuous time version of Proposition \ref{explmm}, \eqref{eq:contexplmm}, we have
\begin{align*}
&\sum_{i=1}^n \ee[|x_{t(a),i}^{(k)}-\bar{x}^{(k)}|\mid x_{t[k]}^{(k)}]
\\
\leq &\sqrt{n\sum_{i=1}^n \ee[(x_{t(a),i}^{(k)}-\bar{x}^{(k)})^2\mid x_{t[k]}^{(k)}]}
\\
=&
\sqrt{n \exp\left(-\frac{t(a)-t[k]}{n-1}\right)\sum_{i=1}^n (x_{t[k],i}^{(k)}-\bar{x}^{(k)})^2}
\\
\leq &
\sqrt{\exp\left(-\frac{\delta n\sqrt{\log_2(n)}}{n-1}\right)2^{\delta \sqrt{\log_2(n)}}\sum_{i=1}^n x_{t[k],i}^{(k)}},
\end{align*}
where in the last inequality, we used $t(a)-t[k] \geq t(a)-t[m] = \delta n\sqrt{\log_2(n)}$, and Lemma \ref{boundpart}.
Now summing over $k$ we get
\begin{align*}
&\sum_{k=1}^m\sum_{i=1}^n \ee[|x_{t(a),i}^{(k)}-\bar{x}^{(k)}|]
\\
\leq &
\sqrt{\exp\left(-\frac{\delta n\sqrt{\log_2(n)}}{n-1}\right)2^{\delta \sqrt{\log_2(n)}}}\sqrt{m\ee\left[\sum_{k=1}^m\sum_{i=1}^n x_{t[k],i}^{(k)}\right]}
\\
\leq &
\sqrt{\exp\left(-\frac{\delta n\sqrt{\log_2(n)}}{n-1}\right)2^{\delta \sqrt{\log_2(n)}}m}.
\end{align*}
Here we used $\sum_{k=1}^m\sum_{i=1}^n x_{t[k],i}^{(k)} = \sum_{k=1}^m\sum_{i=1}^n x_{t(a),i}^{(k)} \leq \sum_{i=1}^n x_{t(a), i}' + \frac{1}{n}=1$ in the last inequality.
For any fixed $\delta$ and $m$, as $n\rightarrow\infty$ the above converges to zero.
This implies that $\sum_{k=1}^m\sum_{i=1}^n |x_{t(a),i}^{(k)}-\bar{x}^{(k)}| \cvp 0$.
Thus asymptotically $T'(t(a))$ is bounded by $2\Phi(-a)+\eta$ in probability.
\end{proof}

\subsection{An upper bound on $L^1$ decay for general initial conditions}
We now prove Theorem \ref{generalinitthm} using Theorem \ref{cutoffthm}, and linearity of the repeated averaging process.

We work with all initial states $x_0\in\bbp_n$, i.e., $x_{0,i}\ge 0$ for each $i$ and $\sum_{i=1}^n x_{0,i}=1$.
Let $T_{x_0}(k)=\sum_{i=1}^n \left|x_{k,i}-\frac{1}{n}\right|$ be the $L^1$ distance starting from $x_0$.
For each $i=1,\ldots, n$ we denote $\bbe_i\in \bbr^n$ as the vector with $1$ at the $i$-th coordinate, and $0$ at all other coordinates. Then $\bbp_n$ is the convex hull of $\bbe_1,\ldots, \bbe_n$.
\begin{proof}[Proof of Theorem \ref{generalinitthm}]
We couple the repeated average processes starting from each $x_0\in \bbp_n$, so that all of them use the same pair of indices to average at each step.
Thus we get a coupling of all $T_{x_0}$. By linearity of the repeated average, we have that under this coupling, $x_k$ starting from $x_0$ is a linear combination of those $x_k$ starting from $\bbe_1,\ldots,\bbe_n$, with weights $x_{0,1},\ldots,x_{0,n}$ respectively. So we have
\[
T_{x_0}(k) \le \oT_{x_0}(k) := \sum_{i=1}^n x_{0,i}T_{\bbe_i}(k),
\]
for each $x_0\in \bbp_n$ and each $k$.
Using that $\sum_{i=1}^n x_{0,i}=1$, and that $\ee[T_{\bbe_i}(k)]$ and $\ee[(T_{\bbe_i}(k))^2]$ are independent of $i$, we have $\ee[\oT_{x_0}(k)] = \ee[T_{\bbe_1}(k)]$, and
\[
\ee[(\oT_{x_0}(k))^2]=\sum_{i,j=1}^n x_{0,i}x_{0,j}\ee[T_{\bbe_i}(k)T_{\bbe_j}(k)] \le \ee[(T_{\bbe_1}(k))^2].
\]
By Theorem \ref{cutoffthm}, as $n\to\infty$ we have
\[
\inf_{x_0\in \bbp_n} \ee[\oT_{x_0}(\lfloor n(\log_2(n) + a\sqrt{\log_2(n)})/2 \rfloor)] \to  2\Phi(-a),
\]
and
\[
\sup_{x_0\in \bbp_n} \ee[(\oT_{x_0}(\lfloor n(\log_2(n) + a\sqrt{\log_2(n)})/2 \rfloor))^2] \to  (2\Phi(-a))^2.
\]
These imply that for any $\epsilon>0$
\[\sup_{x_0\in\bbp_n}\pp[\oT_{x_0}(\lfloor n(\log_2(n) + a\sqrt{\log_2(n)})/2 \rfloor) > 2\Phi(-a)+\epsilon] \to 0,\]
and the conclusion follows.
\end{proof}

\subsection{An exact result for finite termination}
One can also ask whether $T(k)$ eventually attains the value $0$. Of course, this is trivially true if the initial vector is zero. But in general this is  impossible unless $n$ is a power of $2$. 
\begin{prop}
Suppose that $n$ is not a power of $2$. Then there is a vector $x_0$ such that if $x_k$ is defined as above, then $x_k\ne 0$ for all $k$. 
\end{prop}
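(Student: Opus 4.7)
The plan is to recast the problem in linear-algebraic terms. Writing
\[
A_{ij} := I - \tfrac{1}{2}(\bbe_i - \bbe_j)(\bbe_i - \bbe_j)^T
\]
for the symmetric doubly stochastic matrix that averages coordinates $i$ and $j$, a run of the Markov chain through $k$ steps produces $x_k = M_k x_0$ for some finite product $M_k = A_{i_k j_k}\cdots A_{i_1 j_1}$ of such matrices. Under the standing normalization $\bar x_0 = 0$ we have $x_0 \in \mathbf{1}^\perp := \{x \in \bbr^n : \sum_i x_i = 0\}$, and it suffices to exhibit $x_0 \in \mathbf{1}^\perp\setminus\{0\}$ that is not killed by \emph{any} such $M_k$. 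There are only countably many products $M_k$ (one per finite sequence of averaging pairs), so provided each $\ker(M_k)\cap \mathbf{1}^\perp$ is a proper subspace of $\mathbf{1}^\perp$, the countable union of these kernels, being a union of subspaces of dimension at most $n-2$ in the $(n-1)$-dimensional space $\mathbf{1}^\perp$, has Lebesgue measure zero and thus any $x_0$ in its complement will work.

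The main observation is that $\ker(M_k)\cap\mathbf{1}^\perp = \mathbf{1}^\perp$ forces $M_k$ to equal the rank-one matrix $\tfrac{1}{n}J$, where $J$ is the all-ones matrix. Indeed $M_k$ is doubly stochastic, so $M_k \mathbf{1} = \mathbf{1}$; if additionally $M_k y = 0$ for every $y\in\mathbf{1}^\perp$, then decomposing any vector as $\alpha\mathbf{1} + y$ with $y \in \mathbf{1}^\perp$ shows $M_k$ is the rank-one orthogonal projection $\tfrac{1}{n}\mathbf{1}\mathbf{1}^T = \tfrac{1}{n}J$. The second observation, equally simple, is that every entry of each $A_{ij}$ lies in the ring $\mathbb{Z}[1/2]$ of dyadic rationals, and this ring is closed under addition and multiplication, so every entry of every product $M_k$ is a dyadic rational.

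The proposition now follows because $1/n \in \mathbb{Z}[1/2]$ if and only if $n$ is a power of $2$: when $n$ is not a power of $2$, the matrix $\tfrac{1}{n}J$ has entries that are not dyadic and hence cannot equal any product of the $A_{ij}$'s. Consequently $\ker(M_k)\cap\mathbf{1}^\perp$ is a proper subspace of $\mathbf{1}^\perp$ for every such product, and any $x_0 \in \mathbf{1}^\perp$ outside the countable union of these kernels satisfies $x_k = M_k x_0 \neq 0$ for every $k$ and every realization of the random averaging choices --- in fact deterministically over all possible sequences, which is stronger than what the proposition asks. I do not anticipate a serious obstacle here: the argument is elementary linear algebra, and the only points demanding care are the equivalence between $M_k|_{\mathbf{1}^\perp}=0$ and $M_k = \tfrac{1}{n}J$ (which uses the doubly stochastic property together with the invariance of $\mathbf{1}^\perp$) and the closure of $\mathbb{Z}[1/2]$ under matrix multiplication.
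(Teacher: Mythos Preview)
Your argument is correct, and it differs substantively from the paper's. The paper chooses the explicit initial state $x_0 = (1-\tfrac{1}{n}, -\tfrac{1}{n}, \ldots, -\tfrac{1}{n})$ and proves by induction on $k$ that every coordinate of $x_k$ has the form $m/2^l - 1/n$ with $m$ odd or zero; since such a number cannot vanish when $n$ is not a power of $2$, no coordinate of $x_k$ is ever zero. Your approach, by contrast, is non-constructive: you observe that any finite product $M_k$ of the averaging matrices has dyadic entries, so $M_k$ can never equal $\tfrac{1}{n}J$ when $n$ is not a power of $2$, hence $\ker(M_k)\cap\mathbf{1}^\perp$ is always a proper subspace, and a measure (or Baire category) argument produces $x_0$ outside the countable union of these kernels. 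The paper's route is more concrete --- it hands you a specific $x_0$ and in fact shows the stronger fact that no single coordinate ever vanishes --- while yours yields the complementary strengthening that Lebesgue-almost every $x_0 \in \mathbf{1}^\perp$ works, deterministically over all averaging sequences. Both proofs ultimately rest on the same arithmetic obstruction, namely that $1/n$ is not a dyadic rational, but they deploy it at different levels: entrywise in the paper, matrixwise in yours.
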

\begin{proof}
Let $x_0 = (1 - \frac{1}{n},-\frac{1}{n},\ldots,-\frac{1}{n})$. We claim that for any $k$ and any $i$, $x_{k,i}$ equals $m/2^l - 1/n$ for some nonnegative integers $m$ and $l$ where $m$ is odd or zero. This is true for $k=0$ by definition. Suppose that this holds for some $k$. To produce $x_{k+1}$, suppose that we choose two coordinates $i$ and $j$. Suppose that $x_{k,i} = m/2^l - 1/n$ and $x_{k,j} = m'/2^{l'} - 1/n$. Without loss of generality, suppose that $l\ge l'$. Then 
\begin{align*}
x_{k+1, i} = x_{k+1,j} &= \frac{1}{2}\biggl(\frac{m}{2^l} + \frac{m'}{2^{l'}}\biggr) - \frac{1}{n}\\
&= \frac{m + 2^{l-l'}m'}{2^{l+1}} - \frac{1}{n}.
\end{align*}
If $l>l'$, then $m + 2^{l-l'}m'$ is odd and our claim is proved. If $l=l'$, then the above expression reduces to $m''/2^l -1/n$, where $m'' = (m+m')/2$. Since $m'' = 2^jr$ for some $j$ and some odd $r$, this expression becomes $r/2^{l-j} - 1/n$. Note that $l-j\ge 0$, because otherwise $x_{k+1,i}$ would be greater than $1$, which is impossible because we are always averaging quantities that are in $[-1,1]$. This completes the induction step. This also completes the proof of the lemma, because a quantity like $m/2^l - 1/n$, where $m$ is odd, cannot be zero unless $n$ is a power of $2$.
\end{proof}

On the other hand, if $n$ is a power of $2$, then $x_k$ eventually becomes zero for any starting state.
\begin{prop}
If $n$ is a power of $2$, then with probability $1$, $x_k=0$ for all large enough  $k$.
\end{prop}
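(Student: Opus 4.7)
The plan is to exhibit a deterministic sequence of $L := n\log_2(n)/2$ averagings that sends every vector with coordinate-sum zero to the zero vector, and then to apply Borel--Cantelli to show that the random chain executes this sequence eventually.

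First I would construct the sequence in the spirit of a tournament bracket. Write $n = 2^m$ and identify the coordinates with length-$m$ binary strings. In round $k$, for $k = 1, \ldots, m$, and for every unordered pair $\{s, s'\}$ of length-$m$ strings that differ in exactly the $k$-th bit, perform an averaging on the two coordinates indexed by $s$ and $s'$; the order of these $n/2$ averagings within the round is immaterial because the pairs are disjoint. The total length of the sequence is $mn/2 = L$. An induction on $k$ shows that after round $k$, all coordinates indexed by strings that agree on bits $k+1, \ldots, m$ are equal, and equal to the mean of the corresponding $2^k$ entries of $x_0$; in particular, after round $m$ every coordinate equals $\bar{x}_0 = 0$.

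Next, I would note that each averaging preserves the coordinate sum, so $\sum_i x_{k,i} = 0$ for every $k$. For each integer $j \ge 0$, let $A_j$ be the event that during the $L$ steps $jL + 1, \ldots, (j+1)L$ the chain picks exactly the ordered list of index pairs prescribed by the tournament sequence. Since the pair selected at each step is independent of the state of the chain, the events $\{A_j\}_{j \ge 0}$ are independent with common probability $p = \binom{n}{2}^{-L} > 0$. By the second Borel--Cantelli lemma, $\tau := \min\{j \ge 0 : A_j \text{ occurs}\}$ is finite almost surely. By the defining property of the tournament sequence, $x_{(\tau+1)L} = 0$, and because the zero vector is an absorbing state of the chain, $x_k = 0$ for every $k \ge (\tau+1)L$.

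The only substantive step is the construction of the tournament sequence and the verification of its inductive property; once that is in hand, the Borel--Cantelli argument is routine, so I do not anticipate any genuine obstacle.
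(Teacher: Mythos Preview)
Your argument is correct and follows the same approach as the paper: exhibit a fixed finite sequence of averagings that equalizes all coordinates regardless of the starting vector, then observe that this sequence must occur eventually in the random chain. Your hypercube/tournament description is a more explicit rendering of the paper's ``successive blocks of size $2^l$'' scheme, and your Borel--Cantelli justification spells out what the paper leaves as ``it occurs sooner or later as we go along''; the content is the same.
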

\begin{proof}
If $n$ is a power of $2$, then it is easy to see that there is a particular sequence of steps that produces the vector of all $0$'s starting from any $x_0$. For example, consider the case $n=4$. We can first average coordinates $1$ and $2$, and then $3$ and $4$, and then $1$ and $3$ and then $1$ and $4$, which will render all coordinates equal and hence zero. The scheme for $n$ equal to a general power of $2$ is similar: we do averages so that coordinates in successive blocks of size $2^l$ become equal, for $l=1,2,\ldots$, until all coordinates become equal. Note that this scheme has nothing to do with the initial state.

Since the above scheme has a fixed number of steps, it occurs sooner or later as we go along. Thus, sooner or later, $x_k=0$. 
\end{proof}

\end{document}